\documentclass[11pt,twoside,a4paper]{article}
\usepackage{hyperref}
\usepackage{algorithm}
\usepackage{algorithmic}
\usepackage[a4paper,left=2cm,right=2cm,top=2cm,bottom=2cm]{geometry}

\usepackage{setspace}

\usepackage{comment}
\usepackage[utf8]{inputenc}
\usepackage{stmaryrd} 
\usepackage{amsmath}
\usepackage{amsfonts}
\usepackage{bbold}
\usepackage{amssymb}
\usepackage{array}
\usepackage[T1]{fontenc}
\usepackage{graphicx}
\usepackage{caption}
\captionsetup{justification=raggedright,singlelinecheck=false}

\usepackage{subcaption}
\usepackage[english]{babel}
\usepackage[overload]{empheq}
\usepackage[normalem]{ulem}
\usepackage{comment}
\usepackage{color}
\usepackage{stmaryrd}
\usepackage{tikz,pgf}
\usepackage{lmodern}
\usepackage[justification=centering]{caption}
\usepackage{amsthm}
\usepackage{import}
\usepackage{bbm}

\usepackage{lineno}
\usepackage{csquotes}
\usepackage{authblk}
\usepackage{rotating}
\usepackage{setspace}

\newtheorem{The}{Theorem}[section]
\newtheorem{Pro}[The]{Proposition}
\newtheorem{Hypo}[The]{Hypothesis}
\newtheorem{Lem}[The]{Lemma}

\title{Long time behavior of a degenerate stochastic system modeling  the response of a population face to environmental impacts}
\author{Pierre Collet\thanks{CPHT, Ecole Polytechnique, CNRS, Institut polytechnique de Paris, route de
Saclay, 91128 Palaiseau Cedex-France; E-mail:
pierre.collet@cpht.polytechnique.fr}\,, Claire Ecoti\`ere\thanks{CREST, ENSAE, CNRS, Institut polytechnique de Paris,  5 Avenue Henry Le Chatelier 91120 Palaiseau, France; E-mail: claire.ecotiere@ensae.fr}\,,
Sylvie M\'el\'eard\thanks{CMAP, Ecole Polytechnique, CNRS, Institut polytechnique de Paris, Inria, route de
Saclay, 91128 Palaiseau Cedex-France and Institut Universitaire de France; E-mail: 
{sylvie.meleard@polytechnique.edu}}\,.}
\date{September 2023}

\begin{document}

\maketitle

\begin{abstract}
We study the asymptotics of a two-dimensional stochastic differential system with a degenerate diffusion matrix. This system describes the dynamics of a population where individuals  contribute to the degradation of their environment through two different behaviors. We exploit the almost one-dimensional  form of the dynamical system to compute explicitly the Freidlin-Wentzell action functional. 
That allows to give conditions under which the small noise regime of the  invariant measure is concentrated around the equilibrium of the dynamical system having the smallest diffusion coefficient. 
\end{abstract}

Keywords : Stochastic differential system - Large deviations - Small noise regime - Invariant measure. 

\section{Introduction}
Reducing the effects of global change requires the adoption by most of the human population of consistent proenvironmental behaviors. But we, as individuals, tend to act in reponse to alarming events, and relax when things seem to get better. In \cite{ecotiere} Ecotiere et al. have explored and numerically quantified how the tendency to behave inconsistently  can be countered by social interactions and social pressure. In that aim, they  developed a simple two-dimensional mathematical model describing the coupled dynamics of the perceived environmental state over time $(e_t, t\ge 0)$, and the repartition of a fixed size population between two behaviors $A$ and $B$ more or less active  from an environmental perception. Expressing behavior $A$ makes the agent reduce its environmental impact (compared to $B$). The macroscopic frequency dynamics of behavior $A$ in the population  is modeled by the deterministic function $(x_t, t\ge 0)$ taking values in $[0,1]$. The model is as follows:
\begin{equation}
    \begin{aligned}
			\frac{dx_t}{dt} &=p(x_t,e_t)=\kappa x_t(1-x_t)(\lambda_A(x_t )-\lambda_B(x_t))+\tau_A(e_t)(1-x_t)-\tau_B(e_t)x_t \\
			\frac{de_t}{dt} &=h(x_t,e_t)=\ell e_t(l_Ax_t+l_B(1-x_t)-e_t).\\
		\end{aligned}
		\label{sys_dyn}
\end{equation}
The interpretation of the parameters has been carefully  commented in \cite{ecotiere} and we refer to that paper for more details. The deterministic system \eqref{sys_dyn} is the large population approximation of a stochastic system $((X^N_t, E^N_t), t\ge 0)$, where the scaling parameter is the population size $N$  which is assumed to tend to infinity. In this stochastic setting, the population dynamics is described by  a pure jump Markov process whose jump rates model the individual behavior changes and depend on the perceived environmental state. The coupled stochastic system is a piecewise deterministic Markov process where the deterministic environmental component satisfies
$$E^N_t = E^N_0 +\int_0^t h(X^N_s, E^N_s)ds.$$

We are interested in the long time behavior of the stochastic system $((X^N_t, E^N_t), t\ge 0)$, and also  how it is related, for large $N$ to  the long time behavior of the deterministic system \eqref{sys_dyn}. To that goal,  we  have considered the approximation-diffusion of the process (see Ethier-Kurz p.354 for a precise definition) which consists in a two-dimensional process, solution of the stochastic differential system

\begin{equation}
	\left\{
	\begin{aligned}
		dX^N_t&= P(X^N_t,E^N_t)dt+\frac{\sigma(X^N_t,E^N_t)}{\sqrt{2N}}dB_t\\ 
		dE^N_t&= h(X^N_t,E^N_t)dt, \\
	\end{aligned}
	\right.  
	\label{Approx diff}
\end{equation} 
where $B$ is a Brownian motion and
$$\sigma^2(x,e)=\kappa x(1-x)(\lambda_A(x )+\lambda_B(x))+\tau_A(e)(1-x)+\tau_B(e)x.$$

System \eqref{Approx diff} depends on so many parameters that we needed to simplify it by successive parametrizations.
We have  proven in Appendix 1  that, under some assumptions on the parameters and using different parametrizations and changes of variables, the system can be simplified in the following form:
\begin{equation}
	\left\{
	\begin{aligned}
		dX^{\varepsilon}_t&= \lambda_1X^{\varepsilon}_t(1-(X^{\varepsilon}_t)^2)dt+\varepsilon\cos(\theta)\left(\sigma_0+\sigma_1X^{\varepsilon}_t\right)dB_t \\ 
		dY^{\varepsilon}_t&= \left(-\lambda_2 Y^{\varepsilon}_t+ \lambda_3 X^{\varepsilon}_t(1-(X^{\varepsilon}_t)^2) \right)dt+\varepsilon\sin(\theta)\left(\sigma_0+\sigma_1X^{\varepsilon}_t\right)dB_t, \\
	\end{aligned}
	\right.  
	\label{sys_intro}
\end{equation}  
where $B$ is a standard Brownian motion and $\varepsilon$ is a small parameter ($\varepsilon$ of the order of $1/\sqrt{N}$). 
Note hat this system is degenerate for two reasons: first, the two coordinates are driven by the same Brownian motion,  and second, the diffusion coefficient in front of the Brownian motion  cancels on the line $x=-\sigma_0/\sigma_1$; all this leading to a non-invertible diffusion matrix. 

We are interested in the long time behaviour of the process $(X^{\varepsilon},Y^{\varepsilon})$ when $\varepsilon$ is small. The limits $\varepsilon$ tending to $0$ and $t$ tending to infinifty a priori don't  commute and one can ask about 
i) the long time behavior of the process $(X^{\varepsilon},Y^{\varepsilon})$ for fixed $\varepsilon$,
ii) its limit when $\varepsilon$ tends to $0$,
iii) the relation with the long time behavior of the deterministic system 
\begin{equation}
	\left\{
	\begin{aligned}
		dx_t&= b_{1}(x_{t}) = \lambda_1x_t(1-(x_t)^2)dt \\ 
		dy_t&= b_{2}(x_{t}, y_{t}) = \left(-\lambda_2 y_t+ \lambda_3 x_t(1-(x_t)^2) \right)dt, \\
	\end{aligned}
	\right.  
	\label{sys_main}
\end{equation}  
obtained from \eqref{sys_intro} when  $\varepsilon=0$.

\medskip
In Section \ref{section_main}, we state the different results answering to these questions. 
It's standard to prove the convergence of the  stochastic differential system \eqref{sys_intro} when $\varepsilon$ tends to 0, towards the deterministic system \eqref{sys_main}. The latter admits three equilibrium points: two stable points framing a saddle point.
For a fixed $\varepsilon$, we  show the existence, uniqueness and exponential convergence,  of an invariant probability measure $\pi^\varepsilon$, absolutely continuous with respect to the Lebesgue measure.
We also show that the sequence $(\pi^\varepsilon)_\varepsilon$ converges when $\varepsilon$ tends to 0 to  a linear combination of the Dirac measures on the stable equilibrium points of the deterministic system. In particular, we show that  the sign of $\sigma_1$ determines on which of the two stable equilibria, the limit invariant measure $\pi$ is concentrated.


\medskip
The study of invariant measures appears in various fields, ranging from potential wells in physics \cite{hairer2009hot,hairer2009slow} to neural networks (\cite{hopfner2017strongly,holbach2020positive}). 
For non-degenerate stochastic differential systems, we refer to the book by \cite{freidlin1998random} which details the steps to show that the invariant measure of the stochastic differential system is concentrated outside the unstable equilibrium points of the perturbed deterministic system such that col nodes and sources.
This method is based on control theory (see \cite{stroock1972support}) and the calculation of the energy necessary to move from a compact containing one equilibrium point to another one.
The energy translated by the action functional involves the inverse of the diffusion matrix of the stochastic differential system.
In our degenerate case, it is not possible to apply these theorems. We will therefore seek to extend these results to our case study. For this, we will use the particular form of our system which has a main component on its two dimensions.



In Section \ref{section_exist_inv_mes} and \ref{section_limit_supp_inv_mes}, we respectively prove the existence and uniqueness of the system's limit when $t$ tends to infinity, then we study the limit of the invariant measures $\pi^\varepsilon$ when $\varepsilon$ tends to $0$.

\section{Main results}\label{section_main}

Let us state the main results of the paper.
Let $\varepsilon_0$ positive and fixed through the paper, and we consider $\varepsilon\in ]0,\varepsilon_0[$. Recall that the process $(X^{\varepsilon},Y^{\varepsilon})$ is solution of \eqref{sys_intro} 
with $B$  a standard (uni-dimensional) Brownian motion, issued from   $(x_0,y_0)\in \mathbb{R}^2$. The diffusion coefficient for both coordinates  is proportional to
\begin{equation}
\sigma_0+\sigma_1 x,
	\label{coeff_diffusion}
\end{equation} 
where $\sigma_0\ne 0$ and $\sigma_1\in \mathbb{R}$. Let us assume  without restriction that $\sigma_0>0$.
We also assume that 
the parameters  appearing in \eqref{sys_intro}   satisfy the following hypotheses. 

\begin{Hypo} 
\begin{enumerate}
    \item $\displaystyle \sigma_1\in \left] -\sigma_0,\ \sigma_0  \right[,$
	\item $\lambda_1$, $\lambda_2$, $\lambda_3$ are positive constants,
	\item $\theta\in ]-\pi,\pi[\setminus \left\lbrace \pm \dfrac{\pi}{2},\arctan\left(\dfrac{2\lambda_3}{2\lambda_1-\lambda_2} \right)  \right\rbrace$. \end{enumerate} 
	\label{hypo_bruit}
\end{Hypo}

Note that under Assumption \ref{hypo_bruit}.1, the diffusion coefficient doesn't cancel  between the lines $x=-1$ and $x=1$.

\vspace{0.5cm}
Let us introduce the infinitesimal generator associated with the solution of  \eqref{sys_intro}. For $f\in \mathcal{C}^2(\mathbb{R}^2,\mathbb{R})$ and $(x,y)\in \mathbb{R}^2$, we define
\begin{eqnarray} \label{gen_eds}
			\mathcal{L}^\varepsilon f(x,y)&=&\displaystyle \lambda_1x(1-x^2)\frac{\partial f}{\partial x}-\left(\lambda_2 y-\lambda_3x(1-x^2) \right)\frac{\partial f}{\partial y}+\frac{\varepsilon^2(\sigma_0+\sigma_1 x)^2}{2}\left( \cos^2(\theta)\frac{\partial^2 f}{\partial x^2}+\sin^2(\theta) \frac{\partial^2 f}{\partial y^2}\right)\nonumber\\
		&&\displaystyle+ \varepsilon^2(\sigma_0+\sigma_1 x)^2\sin(\theta)\cos(\theta)\frac{\partial^2 f}{\partial x\partial y}.
\end{eqnarray}

\begin{Lem}[Lyapunov condition]
\label{pro_lya}

    Under Hypothesis \ref{hypo_bruit}, there exist $\varepsilon_0$ and $\alpha>0$ such that the infinitesimal generator $\mathcal{L}^\varepsilon$ satisfies a Lyapunov condition for $W(x,y)=1+x^4+ \alpha y^2$,  uniformly for $\varepsilon\le \varepsilon_0$: there exist positive constants $\alpha_1$ and $\alpha_2$, independent of $\varepsilon\le \varepsilon_0$ such that 
	\begin{equation}
		\mathcal{L}^\varepsilon W\leq \alpha_1-\alpha_2 W.
		\label{lya}
	\end{equation} 
	\end{Lem}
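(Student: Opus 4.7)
The plan is a direct computation of $\mathcal{L}^\varepsilon W$ followed by absorption of the cross and diffusion terms via Young's inequality, with $\alpha$ and $\varepsilon_0$ chosen small enough to make the dominant negative terms $-x^6$ and $-y^2$ win.

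First I would compute the partial derivatives of $W(x,y)=1+x^4+\alpha y^2$: $\partial_x W = 4x^3$, $\partial_y W = 2\alpha y$, $\partial^2_{xx}W = 12 x^2$, $\partial^2_{yy}W = 2\alpha$, $\partial^2_{xy}W = 0$. Plugging into \eqref{gen_eds} and noting that the mixed-partial term drops out because $\partial^2_{xy}W=0$, I obtain
\begin{equation*}
\mathcal{L}^\varepsilon W(x,y) = 4\lambda_1 x^4 - 4\lambda_1 x^6 - 2\alpha\lambda_2 y^2 + 2\alpha\lambda_3\, xy(1-x^2) + \varepsilon^2(\sigma_0+\sigma_1 x)^2\bigl(6\cos^2(\theta)\,x^2 + \alpha\sin^2(\theta)\bigr).
\end{equation*}
The two ``good'' dissipative terms are $-4\lambda_1 x^6$ and $-2\alpha\lambda_2 y^2$; the remaining pieces are either of lower order in $x$ or must be absorbed.

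Next I would handle the cross term $2\alpha\lambda_3\, xy(1-x^2) = 2\alpha\lambda_3\, xy - 2\alpha\lambda_3\, x^3 y$. Young's inequality gives, for any $\mu,\nu>0$,
\begin{equation*}
|2\alpha\lambda_3\, x^3 y| \le \mu x^6 + \frac{\alpha^2\lambda_3^2}{\mu}\, y^2, \qquad |2\alpha\lambda_3\, xy| \le \nu x^2 + \frac{\alpha^2\lambda_3^2}{\nu}\, y^2.
\end{equation*}
I would choose $\mu<4\lambda_1$ and then pick $\alpha$ small enough that $\alpha\lambda_3^2/\mu + \alpha\lambda_3^2/\nu < \lambda_2$, so the $y^2$ contribution is strictly dominated by $-2\alpha\lambda_2 y^2$ and the $x^6$ contribution by $-4\lambda_1 x^6$. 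The $\nu x^2$ term is harmless because it will be absorbed by the $x^6$ dissipation together with an $O(1)$ residue.

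For the diffusion term, Hypothesis \ref{hypo_bruit}.1 gives $(\sigma_0+\sigma_1 x)^2 \le 2\sigma_0^2 + 2\sigma_1^2 x^2$, so the whole bracket is bounded by $C(1+x^4)$ with $C$ depending only on $\sigma_0,\sigma_1,\theta,\alpha$. Hence this contribution is at most $C\varepsilon^2(1+x^4)$, and one more application of Young splits $x^4 = x^4$ off as $\delta x^6 + C_\delta$ on $\{|x|\ge 1\}$ (while staying bounded on $\{|x|\le 1\}$). Taking $\varepsilon_0$ small enough that $C\varepsilon_0^2\delta$ leaves a strict residue in front of $-x^6$ makes this term harmless uniformly in $\varepsilon\le \varepsilon_0$.

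Putting everything together, I would find constants $c_1,c_2>0$, independent of $\varepsilon\le\varepsilon_0$, with
\begin{equation*}
\mathcal{L}^\varepsilon W(x,y) \le c_1 - c_2\bigl(x^6 + y^2\bigr).
\end{equation*}
Since $x^6 + y^2 \ge \tfrac{1}{2}(x^4 + \alpha y^2) - C'$ for a universal constant $C'$ (using $x^6 \ge x^4 - 1$ separately on $\{|x|\le 1\}$ and $\{|x|\ge 1\}$), this rearranges into $\mathcal{L}^\varepsilon W \le \alpha_1 - \alpha_2 W$ for suitable $\alpha_1,\alpha_2>0$ independent of $\varepsilon\in(0,\varepsilon_0]$. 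The main (mild) obstacle is the simultaneous choice of $\alpha$, $\mu$, $\nu$, $\delta$ and $\varepsilon_0$ so that every perturbative term is strictly dominated; this is a finite system of inequalities that is feasible precisely because the coefficients $\lambda_1,\lambda_2$ are positive and the diffusion coefficient grows at most linearly in $x$.
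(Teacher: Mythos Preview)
Your proof is correct and follows essentially the same strategy as the paper's: compute $\mathcal{L}^\varepsilon W$ directly, isolate the dominant negative part $-4\lambda_1 x^6 - 2\alpha\lambda_2 y^2$, and absorb the cross term and the diffusion remainder for $\alpha$ and $\varepsilon_0$ small. The only technical difference is in how the dangerous cross term $-2\alpha\lambda_3\,x^3 y$ is handled: the paper rewrites the leading part as $-2x^6\bigl(2\lambda_1+\alpha\lambda_2(y/x^3)^2+\alpha\lambda_3(y/x^3)\bigr)$ and uses a discriminant condition to obtain the explicit range $\alpha\in(0,8\lambda_1\lambda_2/\lambda_3^2)$, whereas you split it with Young's inequality and get ``$\alpha$ small enough'' without an explicit threshold. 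Your treatment of the lower-order and diffusion terms is in fact more detailed than the paper's, which simply asserts that $\alpha_1,\alpha_2$ can be ``conveniently chosen'' once the dominant part is negative.
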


\begin{proof}
	For all $0<\varepsilon\leq \varepsilon_0$, we have
	\begin{equation*}
		\begin{array}{ll}
			\mathcal{L}^\varepsilon W(x,y) &= 4\lambda_1x^4(1-x^2)- 2\alpha {y} (\lambda_2 y-\lambda_3x(1-x^2)) +\varepsilon^2(\sigma_0+\sigma_1 x)^2 (6 x^2\cos^2(\theta) + \alpha\sin^2(\theta) ) . \\
		\end{array}
	\end{equation*}
The dominant term is
\begin{eqnarray*} && - 4\lambda_1 x^6 - 2\alpha \lambda_2 y^2 -2\alpha   \lambda_3 yx^3\\
&=& - 2 x^6\Big( 2 \lambda_1 + \alpha \lambda_2 \big(\frac{y}{x^3}\big)^2+ \alpha   \lambda_3 \big(\frac{y}{x^3}\big)\Big).
\end{eqnarray*}
We can observe that under Hypothesis \ref{hypo_bruit}.2,  the polynomial $t \longrightarrow  2 \lambda_1 + \alpha \lambda_2 t^2  + \alpha   \lambda_3  t$  will be always positive if and only if its discriminant is negative, i.e. for any $\alpha$ satisfying 
$$\alpha \leq 8 \frac{\lambda_{2}\lambda_1}{ \lambda_3^2}.$$
Therefore, under Hypothesis \ref{hypo_bruit}.2 and for
$\varepsilon\leq \varepsilon_0$, one can conclude that for
$\alpha\in]0,8\lambda_{1}\lambda_{2}/\lambda_{3}^{2}[$, 
\[\mathcal{L}^\varepsilon W\leq \alpha_1-\alpha_2 W,\]
for $\alpha_1$ and $\alpha_2$ (independent of $\varepsilon$), conveniently chosen. 
\end{proof}

It is then standard, using \eqref{lya}, to prove that for any $T>0$ and any fixed $\varepsilon>0$ and  if $\mathbb{E}(W(X^\varepsilon_0,Y^\varepsilon_0))<+\infty$,  there exists a unique solution $((X^\varepsilon_t,Y^\varepsilon_t), t\in [0,T])$ of  \eqref{sys_intro} in the space of continuous processes with  $\mathbb{E}(\sup_{{t\le T}}W(X^\varepsilon_t,Y^\varepsilon_t))<+\infty$. Moreover \eqref{lya} also implies that if $\sup_{\varepsilon\le \epsilon_{0}}\mathbb{E}(W(X^\varepsilon_0,Y^\varepsilon_0))<+\infty$, then   $\sup_{\varepsilon\le \epsilon_{0}}\mathbb{E}(\sup_{{t\le T}}W(X^\varepsilon_t,Y^\varepsilon_t))<+\infty$. 
In all what follows, we will assume that 
$$\sup_{\varepsilon\le \epsilon_{0}}\mathbb{E}(W(X^\varepsilon_0,Y^\varepsilon_0))<+\infty$$
and we will denote by $\ (P^\varepsilon_t)_{t\ge 0}\ $ the semigroup associated with $((X^\varepsilon_t,Y^\varepsilon_t), t\in \mathbb{R}_{+})$. 

\medskip

Assuming in addition  that $(X^\varepsilon_0,Y^\varepsilon_0)$ converges in probability to the deterministic point $(x_{0},y_{0})$, classical arguments  allow to show that for any $T>0$, the processes   $((X^\varepsilon_t,Y^\varepsilon_t), t\in [0,T])$ converge in probability to the solution $((x_{t},y_{t}),  t\in [0,T])$ of the  deterministic system \eqref{sys_main} issued from $(x_{0}, y_{0})$, when $\varepsilon$ tends to 0. 
System \eqref{sys_main} admits three equilibria defined by
\begin{equation}
	z_1=(-1,0), \ z_2=(0,0)\text{ and }z_3=(1,0).
	\label{pt_eq_zi}
\end{equation}

 Let us note that $z_2$ is a saddle-node and $z_1$ and $z_3$ are attractive points. The basin of attraction of $z_{1}$ (resp. $z_{3}$) is the open left (resp. right) half plane. The  basin of attraction of $z_{2}$ is the $y$-axis.\\

Making  $\varepsilon$ tend to 0 before time $t$ tend to infinity yields
\begin{equation}
		\underset{t\to +\infty}{\lim}\ \underset{\varepsilon\to 0}{\lim}(X^\varepsilon_t,Y^\varepsilon_t)=z_1\mathbb{1}_{\left\lbrace x_0<0\right\rbrace }+z_2\mathbb{1}_{\left\lbrace x_0=0\right\rbrace }+z_3\mathbb{1}_{\left\lbrace x_0>0\right\rbrace }.
\end{equation}

 We are now interested in the limit $t\rightarrow +\infty$ first, when $\epsilon $ is fixed. In that aim, 
let us associate with the Lyapounov function $W$  the norm $\Vert\cdot \Vert_W$ defined on the set of finite signed measures  by 
\begin{equation}
	\Vert\mu\Vert_W=\int_{\mathbb{R}^2} W(x,y)|\mu|(dx,dy),
	\label{norme_lya_annexe}
\end{equation}
 $\mu$ being a finite signed measure on $\mathbb{R}^2$.\\

We can state the two main results of the paper.

\begin{The}
	Let $\varepsilon\in ]0,\varepsilon_0[$ fixed. Under  Hypothesis \ref{hypo_bruit}, the process $(X^\varepsilon_t,Y^\varepsilon_t)_{t\geq 0}$ solution of \eqref{sys_intro} admits a unique invariant probability measure $\pi^\varepsilon$ with  Lesbegue density. There exist a constant $C>0$ and $\gamma>0$ such that for any $(x,y)\in\mathbb{R}^2$,
	\begin{equation}
		\Vert P^\varepsilon_t((x,y),.)-\pi^\varepsilon(.) \Vert_W \leq C W(x,y) e^{-\gamma t}.
	\end{equation}

	\label{thm_mes_inv_unique}
\end{The}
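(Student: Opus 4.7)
The plan is to prove Theorem \ref{thm_mes_inv_unique} via the Hairer--Mattingly version of Harris' theorem, which asserts that a Lyapunov condition combined with a minorization condition on sublevel sets of the Lyapunov function implies the existence and uniqueness of an invariant probability measure together with exponential convergence in the weighted total variation norm $\|\cdot\|_W$. The Lyapunov inequality $\mathcal{L}^\varepsilon W \le \alpha_1 - \alpha_2 W$ is already delivered by Lemma \ref{pro_lya}. What remains is to establish that for some fixed $t_0>0$ and every level set $\{W\le R\}$, there exist a probability measure $\nu$ on $\mathbb{R}^2$ and a constant $\delta>0$ such that $P^\varepsilon_{t_0}((x,y),\cdot)\ge \delta\,\nu(\cdot)$ for every $(x,y)$ in that sublevel set.

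The minorization will be obtained by showing that $P^\varepsilon_{t_0}((x,y),\cdot)$ admits a jointly continuous density $p^\varepsilon_{t_0}$ with respect to Lebesgue measure, strictly positive on a suitable open set. For smoothness, I would invoke H\"ormander's parabolic hypoellipticity theorem. Writing the system in Stratonovich form with drift vector field $A_0$ and diffusion vector field $A_1=(\sigma_0+\sigma_1 x)(\cos\theta,\sin\theta)$, it suffices to verify that $\mathrm{Span}\{A_1,[A_0,A_1]\}=\mathbb{R}^2$ outside a negligible set. A direct computation of the Lie bracket shows that the failure of this rank condition corresponds precisely to the alignments excluded by Hypothesis \ref{hypo_bruit}.3 (the angles $\pm\pi/2$ ensuring non-degeneracy of $\cos\theta$ and $\sin\theta$, and the exceptional angle $\arctan(2\lambda_3/(2\lambda_1-\lambda_2))$ arising from the alignment of $[A_0,A_1]$ with $A_1$ at the equilibria). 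Together with Hypothesis \ref{hypo_bruit}.1, which guarantees $\sigma_0+\sigma_1 x>0$ on $[-1,1]$, this produces a smooth density on the set where $A_1\neq 0$. For positivity of $p^\varepsilon_{t_0}$ on a common open set, I would apply the Stroock--Varadhan support theorem: since $X^\varepsilon$ solves a genuinely one-dimensional SDE autonomous in $x$, the image of any smooth control covers a large range of $x$-trajectories, and the coupling of $Y^\varepsilon$ via the drift term $\lambda_3 x(1-x^2)$ and the shared noise allows one, using the same non-degeneracy as above, to steer the deterministic control system between any two points of an open set in $\mathbb{R}^2$.

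Combining the smoothness and positivity with a compactness argument on the sublevel set $\{W\le R\}$ then produces a uniform lower bound $p^\varepsilon_{t_0}((x,y),(x',y'))\ge \delta>0$ on some ball $B\subset\mathbb{R}^2$, which yields exactly the required minorization with $\nu$ proportional to Lebesgue measure restricted to $B$. Existence of $\pi^\varepsilon$ follows from Lemma \ref{pro_lya} via Krylov--Bogoliubov (tightness of the time averages of $P^\varepsilon_t\delta_{(x,y)}$), absolute continuity of $\pi^\varepsilon$ with respect to Lebesgue measure follows from the smoothness of $p^\varepsilon_{t_0}$ and the invariance identity $\pi^\varepsilon=\pi^\varepsilon P^\varepsilon_{t_0}$, and uniqueness together with the exponential bound $\|P^\varepsilon_t((x,y),\cdot)-\pi^\varepsilon\|_W\le C\,W(x,y)\,e^{-\gamma t}$ is the conclusion of Harris' theorem.

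The main obstacle is the minorization step, and specifically the verification of H\"ormander's condition and of the support/controllability statement in the presence of the twofold degeneracy (single Brownian motion driving both coordinates, and vanishing of $\sigma_0+\sigma_1 x$ on a line which, although outside $[-1,1]$ by Hypothesis \ref{hypo_bruit}.1, can be visited by the process before it returns to the sublevel set). The key reason the argument still works is that the H\"ormander rank condition holds on the complement of this degenerate line, which the process leaves in finite time with positive probability, and Hypothesis \ref{hypo_bruit}.3 is tailored precisely so that the single Lie bracket $[A_0,A_1]$ recovers the missing transverse direction at the equilibria around which the sublevel sets concentrate.
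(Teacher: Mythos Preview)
Your approach is correct and follows essentially the same strategy as the paper: Lyapunov condition plus H\"ormander-type hypoellipticity plus controllability, assembled via a Harris-type ergodic theorem. The paper packages this more economically by invoking Theorems~6.34 and~8.15 of Bena\"im--Hurth, which require only that the weak H\"ormander condition hold at some \emph{accessible} point; accordingly, the paper verifies weak H\"ormander just at the stable equilibria $z_1=(-1,0)$ and $z_3=(1,0)$ (a single Lie bracket computation showing $F_1$ and $[F_0,F_1]$ are linearly independent there, which is precisely where Hypothesis~\ref{hypo_bruit}.3 enters), and proves accessibility of these two points by an explicit piecewise-constant control argument. This sidesteps your worry about the degenerate line $x=-\sigma_0/\sigma_1$: one does not need H\"ormander globally or on a full sublevel set, only at one accessible point, and the Bena\"im--Hurth framework then delivers existence, uniqueness, absolute continuity, and the exponential $\|\cdot\|_W$ bound directly. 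Your route via explicit minorization on sublevel sets is equivalent in content but slightly heavier, since it forces you to argue uniform positivity of the density over a compact set rather than at a single point.
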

The proof of Theorem \ref{thm_mes_inv_unique} will be postponed to Section \ref{section_exist_inv_mes}.
 
\begin{The}
	When $\varepsilon$ tends to 0, the sequence $(\pi^\varepsilon)_{\varepsilon<\varepsilon_0}$ converges to the measure $\pi$ defined by 
	\begin{equation}
		\pi=\left\lbrace \begin{array}{ll}
			\delta_{(1,0)}& if\  \sigma_1<0,\\
			&\\
			\displaystyle \frac{1}{2}\delta_{(1,0)}+\frac{1}{2}\delta_{(-1,0)}& if\  \sigma_1=0,\\
			&\\
			\delta_{(-1,0)}& if\  \sigma_1>0.\\
		\end{array}
		\right.
	\end{equation} 
	\label{thm_mes_lim_exp}
\end{The}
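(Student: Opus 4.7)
The plan is to apply the Freidlin--Wentzell identification of the support of the small-noise limit of the invariant measure, adapted to the degeneracy of \eqref{sys_intro}. The key simplification is that the single Brownian motion driving both coordinates yields an explicit closed form for the action functional, so the quasi-potential between the two stable equilibria reduces to a one-dimensional variational problem.

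First I would establish tightness and locate the support of any weak limit. Integrating the inequality $\mathcal{L}^\varepsilon W \le \alpha_1 - \alpha_2 W$ from Lemma \ref{pro_lya} against $\pi^\varepsilon$ gives, by stationarity, $\pi^\varepsilon(W) \le \alpha_1/\alpha_2$ uniformly in $\varepsilon$, so the family $(\pi^\varepsilon)_{\varepsilon<\varepsilon_0}$ is tight on $\mathbb{R}^2$. Passing to the limit $\varepsilon\to 0$ in the identity $\pi^\varepsilon P^\varepsilon_t = \pi^\varepsilon$ and using the convergence of the noisy flow to the deterministic flow \eqref{sys_main} on compact time intervals, any weak accumulation point $\pi$ is invariant under the deterministic semigroup, hence supported on the $\omega$-limit set $\{z_1, z_2, z_3\}$.

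Next I would compute the Freidlin--Wentzell quasi-potential between the equilibria. For any absolutely continuous path $(\phi,\psi):[0,T]\to\mathbb{R}^2$, the rate function associated with \eqref{sys_intro} takes the explicit form
\begin{equation*}
I_T(\phi,\psi) = \frac{1}{2}\int_0^T \left(\frac{\dot\phi_t - \lambda_1 \phi_t(1-\phi_t^2)}{\cos(\theta)(\sigma_0+\sigma_1\phi_t)}\right)^2 dt
\end{equation*}
whenever $(\phi,\psi)$ satisfies the rigid constraint $\dot\psi_t + \lambda_2\psi_t - \lambda_3 \phi_t(1-\phi_t^2) = \tan(\theta)(\dot\phi_t - \lambda_1\phi_t(1-\phi_t^2))$, and is $+\infty$ otherwise. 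Minimizing over paths from $z_3=(1,0)$ to the saddle $z_2=(0,0)$ (the $\psi$-component is then slaved to $\phi$ by the constraint, and any remaining $y$-discrepancy is eliminated for free by following the deterministic flow after reaching a neighborhood of $z_2$), the optimizer is the time-reversed trajectory $\dot\phi=-\lambda_1\phi(1-\phi^2)$, giving
\begin{equation*}
V(z_3,z_2) = \frac{2\lambda_1}{\cos^2(\theta)}\int_0^1 \frac{s(1-s^2)}{(\sigma_0+\sigma_1 s)^2}\,ds, \qquad V(z_1,z_2) = \frac{2\lambda_1}{\cos^2(\theta)}\int_0^1 \frac{s(1-s^2)}{(\sigma_0-\sigma_1 s)^2}\,ds,
\end{equation*}
the second formula obtained by the change of variable $s\to -s$ on $[-1,0]$. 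Thanks to Hypothesis \ref{hypo_bruit}.1 the denominators do not vanish, and the difference $V(z_1,z_2)-V(z_3,z_2) = \frac{2\lambda_1}{\cos^2(\theta)}\int_0^1 s(1-s^2)\bigl[(\sigma_0-\sigma_1 s)^{-2}-(\sigma_0+\sigma_1 s)^{-2}\bigr]\,ds$ has the same sign as $\sigma_1$.

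The Freidlin--Wentzell $W$-graph criterion then identifies the limit measure: since the quasi-potential from $z_2$ to either $z_1$ or $z_3$ is zero (one follows the unstable manifold of the saddle for free), one computes $W(z_1)=V(z_3,z_2)$, $W(z_3)=V(z_1,z_2)$, and $W(z_2)=V(z_1,z_2)+V(z_3,z_2)$, so mass accumulates at the stable equilibrium with the largest escape cost, never at the saddle. When $\sigma_1>0$ it costs strictly more to leave $z_1$ than $z_3$, so $\pi^\varepsilon \Rightarrow \delta_{(-1,0)}$; the case $\sigma_1<0$ is symmetric; when $\sigma_1=0$ the two costs coincide and the $(x,y)\mapsto (-x,-y)$ symmetry of \eqref{sys_intro} gives equal weights $1/2$ on each stable equilibrium. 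I expect the main obstacle to be the rigorous deployment of this concentration result in our degenerate, non-compact setting, as the classical statements in \cite{freidlin1998random} assume ellipticity and a compact state space. One must establish a uniform sample-path large deviation principle with the explicit degenerate rate function above, derive exit-time estimates from neighborhoods of $z_1$ and $z_3$ governed by the explicit quasi-potentials, and use the Lyapunov function $W$ and the tightness of $(\pi^\varepsilon)$ to control the behavior at infinity.
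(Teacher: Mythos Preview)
Your proposal is correct and follows the same overall route as the paper: tightness of $(\pi^\varepsilon)$, identification of any limit as a convex combination of $\delta_{z_i}$, reduction of the degenerate two-dimensional action functional to the one-dimensional problem in $x$, and Freidlin--Wentzell $W$-graph analysis to select among the equilibria. Your acknowledgment that the rigorous justification of the concentration result in the degenerate, non-compact setting is the technical bottleneck matches the paper exactly.

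There are two mild methodological differences worth flagging. First, for tightness you integrate $\mathcal{L}^\varepsilon W \le \alpha_1-\alpha_2 W$ directly against $\pi^\varepsilon$ to obtain $\pi^\varepsilon(W)\le\alpha_1/\alpha_2$; the paper instead bounds occupation measures and combines this with the exponential ergodicity of Theorem~\ref{thm_mes_inv_unique}. Your argument is cleaner but requires a word on why $W$ (unbounded, with unbounded $\mathcal{L}^\varepsilon W$) can legitimately be tested against $\pi^\varepsilon$; a truncation is needed. Second, and more interestingly, you compute the quasi-potential for general $\sigma_1$ by the explicit one-dimensional formula $V(z_3,z_2)=\tfrac{2\lambda_1}{\cos^2\theta}\int_0^1 s(1-s^2)(\sigma_0+\sigma_1 s)^{-2}\,ds$, obtained by optimizing over path speed and recognizing that the minimizer is still the time-reversed drift $\dot\phi=-\lambda_1\phi(1-\phi^2)$ even when the diffusion coefficient is nonconstant. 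The paper instead establishes this optimizer only when $\sigma_1=0$ (via the identity $(\dot w-F)^2=(\dot w+F)^2-4\dot w F$, which yields an exact derivative only for constant diffusion) and then handles $\sigma_1\neq 0$ by comparison, sandwiching $\sigma_0+\sigma_1 x$ between constants on $[-1-\delta,0]$ and $[0,1+\delta]$ to obtain the strict inequality $V^\sigma_{12}>V^\sigma_{32}$ for $\sigma_1>0$. Your direct computation is more informative (it gives the exact values, not just the ordering) and your sign analysis of the difference is correct; the paper's comparison argument is more elementary but less sharp.
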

This result shows that the most stable equilibrium point is the one with the associated diffusion coefficient closest to zero.

The proof of Theorem \ref{thm_mes_lim_exp} will be postponed to Section \ref{section_limit_supp_inv_mes}.

\section{Proof of Theorem \ref{thm_mes_inv_unique}}\label{section_exist_inv_mes}

The proof of Theorem \ref{thm_mes_inv_unique} is based on Proposition 6.32, Theorems 6.34 and Theorem 8.15 in  \cite{benaim2022markov} (see also  Corollary 7.2 in \cite{bellet2006ergodic}). It will consist in proving  first  (Lemma  \ref{lem:hormander})
that the stable equilibria $z_1$ and $z_3$ of \eqref{sys_main} satisfy the weak H\"ormander condition and second (Lemma \ref{lem:access})  that these two points are accessible. As explained in the introduction, the difficulty in this proof comes from the degeneracy of the stochasticity. \\

\begin{Lem}
     Under Hypothesis \ref{hypo_bruit}, the stable equilibria points $z_1$ and $z_3$ defined by \eqref{pt_eq_zi} satisfy the weak H\"ormander condition.
    \label{lem:hormander}
\end{Lem}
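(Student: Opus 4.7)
The plan is to verify the weak Hörmander condition at $z_1$ and $z_3$ by exhibiting the single diffusion vector field together with one Lie bracket whose values span $\mathbb{R}^2$. Write \eqref{sys_intro} as $dZ^\varepsilon_t = b(Z^\varepsilon_t)\,dt + v(Z^\varepsilon_t)\,dB_t$ with
$$b(x,y) = \bigl(\lambda_1 x(1-x^2),\ -\lambda_2 y + \lambda_3 x(1-x^2)\bigr), \qquad v(x,y) = \varepsilon(\sigma_0 + \sigma_1 x)\bigl(\cos\theta,\ \sin\theta\bigr).$$
Hörmander's condition is usually formulated for the Stratonovich form, but the Stratonovich correction here only shifts $b$ by an $O(\varepsilon^2)$ vector that is everywhere parallel to $v$, so it neither changes the direction of the diffusion vector field nor affects the linear-independence computation below; I therefore work directly with $b$ and $v$.

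At $z_i = (\pm 1, 0)$ one has $b(z_i) = 0$ while $v(z_i) = \varepsilon(\sigma_0 \pm \sigma_1)(\cos\theta, \sin\theta)$ is nonzero thanks to Hypothesis \ref{hypo_bruit}.1 (which ensures $\sigma_0 \pm \sigma_1 > 0$). Since the drift vanishes at $z_i$, the Lie bracket simplifies to $[b, v](z_i) = -Db(z_i)\,v(z_i)$. A direct computation, using $1 - 3x^2 = -2$ at $x = \pm 1$, gives
$$Db(\pm 1, 0) = \begin{pmatrix} -2\lambda_1 & 0 \\ -2\lambda_3 & -\lambda_2 \end{pmatrix},$$
so $[b, v](z_i)$ is a nonzero scalar multiple of $(2\lambda_1 \cos\theta,\ 2\lambda_3\cos\theta + \lambda_2\sin\theta)$.

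The last step is to show that $v(z_i)$ and $[b,v](z_i)$ are linearly independent; up to a nonzero scalar the corresponding $2\times 2$ determinant equals
$$\cos\theta\,\bigl(2\lambda_3\cos\theta + (\lambda_2 - 2\lambda_1)\sin\theta\bigr),$$
which vanishes exactly when $\theta = \pm\pi/2$ or $\tan\theta = 2\lambda_3/(2\lambda_1 - \lambda_2)$. Both cases are excluded by Hypothesis \ref{hypo_bruit}.3, so $\{v(z_i), [b,v](z_i)\}$ spans $\mathbb{R}^2$ and the weak Hörmander condition holds at $z_1$ and at $z_3$.

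I do not anticipate any serious obstacle: the lemma reduces to the single bracket computation above, and the role of the hypotheses becomes transparent—parts 1 and 2 guarantee that $v(z_i)$ and the relevant entries of $Db(z_i)$ are nondegenerate, while part 3 is designed precisely to rule out the two angles at which the determinant above degenerates. The only point requiring a short justification is the Itô/Stratonovich passage, which I have already indicated is harmless because the correction is proportional to $v$ itself and therefore invisible in the span.
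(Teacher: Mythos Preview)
Your proof is correct and follows essentially the same approach as the paper: exhibit the diffusion vector field and its Lie bracket with the drift at $z_i$, then check that the resulting $2\times 2$ determinant is nonzero under Hypothesis~\ref{hypo_bruit}.3. Your version is slightly more streamlined---you exploit $b(z_i)=0$ to reduce the bracket to $-Db(z_i)\,v(z_i)$ rather than computing the full bracket formula, and you explicitly note that the It\^o--Stratonovich correction is parallel to $v$, a point the paper glosses over.
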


\begin{proof}	
	Let us introduce the vector fields $F_0=\lambda_1x(1-x^2)\partial_x+(\lambda_3x(1-x^2)-\lambda_2 y) \partial_y$ and $F_1=\varepsilon (\sigma_{0} +\sigma_{1}x) \left( \cos(\theta)\partial_{x}+\sin(\theta) \partial_{y}\right) $.  The infinitesimal generator \eqref{gen_eds} can be written as : 
	\begin{equation}
		\mathcal{L}^\varepsilon= F_0+\frac{1}{2}F^2_1,
	\end{equation}
	where $F^2_1= \varepsilon^2(\sigma_{0} +\sigma_{1}x)^2( \cos^2(\theta)\partial_{xx}+\sin^2(\theta) \partial_{yy}+2\cos(\theta)\sin(\theta)\partial_{xy}).$
	\\
	We introduce the sequence of vector fields $(\mathcal{A}_k)$, $k\geq0$, defined by $\mathcal{A}_0=\{F_0, F_{1}\}$ and $\mathcal{A}_{k+1}=\mathcal{A}_{k}\cup\{ [F_i,G],G\in\mathcal{A}_{k} , i=0,1\}$ where $[U,V]$ is  the Lie's bracket of $U$ and $V$. We denote by  $\mathcal{A}_k(x,y)$ the vector space spanned by the vector fields of $\mathcal{A}_k$ for $(x,y)\in \mathbb{R}^2$.
	
As can be seen in \cite[Section 6.5.2]{benaim2022markov} (see also \cite{hormander1967hypoelliptic}), a point $(x,y)\in \mathbb{R}^2$ satisfies the weak H\"ormander's condition  if there exists $k>0$ such that the vectoriel space $\mathcal{A}_k(x,y)$ spans $\mathbb{R}^2$.\\


For $k=0$, $\mathcal{A}_0(z_{1})$ and $\mathcal{A}_0(z_{3})$ only span  $\displaystyle \mathbb{R}$, since $F_{0}$ is nul in this case. 
For $k=1$, 
we compute 
	\begin{eqnarray*}
					\left[ F_0, F_1 \right](x,y) &=& F_0(F_1)-F_1(F_0)\\ 
		 &=& \varepsilon\cos(\theta)\left(\lambda_1(\sigma_{0} +\sigma_{1}x)(3x^2-1)+ \lambda_1\sigma_1x(1-x^2)\right) \partial_x\\
   &&+\varepsilon\left[\cos(\theta)\lambda_3(\sigma_{0} +\sigma_{1}x)(3x^2-1)+\sin(\theta)\left((\sigma_{0} +\sigma_{1}x)\lambda_2+\lambda_1\sigma_1x(1-x^2) \right)\right]\partial_y. 
	\end{eqnarray*}	
	
	Then  $$\left[ F_0, F_1 \right](z_{1}) = \varepsilon  (\sigma_{0} -\sigma_{1}) \Big(2  \lambda_1 \cos(\theta)\partial_x + \big( 2\lambda_3 \cos(\theta)+ \lambda_2 \sin(\theta) \big)\partial_y\Big),$$
and 	$\left[ F_0, F_1 \right](z_{3})$ has a similar value with  $ (\sigma_{0} -\sigma_{1})$ replaced by  $(\sigma_{0} +\sigma_{1})$.
Let us study the colinearity of  $F_1$ and $\left[ F_0, F_1 \right]$ at $z_{1}$. 
The matrix 
	\begin{equation} 
		\begin{pmatrix}
			2   \lambda_1 \cos(\theta)& \cos(\theta)\\
			2\lambda_3 \cos(\theta) + \lambda_2  \sin(\theta) & \sin(\theta)\\
		\end{pmatrix}
		\label{colineaire_mat}
	\end{equation} 
has the determinant  $\   \cos(\theta) \big( (2   \lambda_1- \lambda_2)  \sin(\theta) - 2\lambda_3 \cos(\theta)  \big)\ $
which is non zero under   Hypothesis \ref{hypo_bruit}.3.
  Therefore,  $\mathcal{A}_1(z_{1})$ spans $\mathbb{R}^2$ and the same property holds for $z_{3}$. Thus the two stable equilibria $z_{1}$ and $z_{3}$  satisfy  the weak H\"ormander condition. 
\end{proof}

Let us now prove their accessibility.

\begin{Lem}
     Under Hypothesis \ref{hypo_bruit},  the stable equilibria $z_1$ and $z_3$ are accessible for $(P^\varepsilon_t)$ from any $(x_{0},y_{0})\neq (0,0)$.
    \label{lem:access}
\end{Lem}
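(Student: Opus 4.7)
The plan is to invoke the Stroock--Varadhan support theorem \cite{stroock1972support}, which reduces accessibility of an equilibrium to the topological controllability of the associated deterministic control system. Concretely, it suffices to show that, for every neighborhood $U$ of $z_{3}=(1,0)$ (respectively $z_{1}=(-1,0)$) and every $(x_{0},y_{0})\neq(0,0)$, there exist $T>0$ and a smooth control $u:[0,T]\to\mathbb{R}$ such that the solution $(x(t),y(t))$ of
\begin{equation*}
\dot{x}=\lambda_{1}x(1-x^{2})+\varepsilon\cos(\theta)(\sigma_{0}+\sigma_{1}x)u,\qquad \dot{y}=-\lambda_{2}y+\lambda_{3}x(1-x^{2})+\varepsilon\sin(\theta)(\sigma_{0}+\sigma_{1}x)u,
\end{equation*}
issued from $(x_{0},y_{0})$, satisfies $(x(T),y(T))\in U$. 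The Stratonovich correction produced by the It\^o-to-Stratonovich conversion is only an $O(\varepsilon^{2})$ perturbation of the drift and plays no role in the topological arguments below.

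The key structural inputs are that $\cos(\theta)\neq 0$ by Hypothesis \ref{hypo_bruit}.3 and that $\sigma_{0}+\sigma_{1}x\geq \sigma_{0}-|\sigma_{1}|>0$ for every $x\in[-1,1]$ by Hypothesis \ref{hypo_bruit}.1, so the $x$-component of the control vector field is uniformly bounded away from $0$ on $[-1,1]$. I would then proceed in two phases. \emph{Phase 1 (steering into the correct basin).} For $z_{3}$, if $x_{0}>0$ nothing is needed; if $x_{0}\leq 0$, apply a constant $u$ of large enough magnitude and of the sign of $\cos(\theta)$ so that $\dot{x}\geq\delta>0$ uniformly on $[x_{0}\vee(-1),1/2]$, which drives $x$ to $1/2$ in some finite time $T_{1}$ (when $x_{0}<-1$, the cubic drift $\lambda_{1}x(1-x^{2})$ already carries $x$ up into $[-1,0]$). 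The point $(0,y_{0})$ with $y_{0}\neq 0$ is a sub-case: at $x=0$ the control term $\varepsilon\cos(\theta)\sigma_{0}u$ is nonzero, so the same construction works. \emph{Phase 2 (relaxation).} Set $u\equiv 0$ on $[T_{1},T_{1}+T_{2}]$: since $\{x>0\}$ is the basin of attraction of $z_{3}$ for the deterministic system \eqref{sys_main}, the trajectory enters $U$ for $T_{2}$ large enough. The argument for $z_{1}$ is symmetric, with the opposite sign for $u$ in Phase 1.

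The only delicate point, and the main obstacle, is to control the $y$-coordinate. During Phase 1, $y$ may drift, but since $x$ is confined to a bounded interval over the finite window $[0,T_{1}]$, the right-hand side is bounded, giving an a priori bound on $|y(T_{1})|$. During Phase 2, the equation $\dot{y}=-\lambda_{2}y+\lambda_{3}x(1-x^{2})$ combines a strictly dissipative term $-\lambda_{2}y$ with a forcing $\lambda_{3}x(1-x^{2})$ that vanishes as $x(t)\to 1$; together they drive $y(t)\to 0$. This confirms convergence of the full planar trajectory to $z_{3}$, which was the property stated just after \eqref{pt_eq_zi}, and hence closes the argument.
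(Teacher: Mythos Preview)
Your proof is correct and takes essentially the same approach as the paper: both reduce accessibility to controllability of the associated deterministic control system (the paper via Proposition~6.32 of \cite{benaim2022markov}, you via Stroock--Varadhan), then use a large constant control to drive the $x$-coordinate across into the desired half-plane and switch to $u\equiv 0$ so that the flow of \eqref{sys_main} carries the trajectory to the target equilibrium. The only organizational difference is that the paper first relaxes along the uncontrolled flow to a neighborhood of the ``wrong'' equilibrium before pushing across (so the control acts only on a fixed interval where $\sigma_{0}+\sigma_{1}x$ is bounded away from zero), whereas you push directly from $x_{0}$; your treatment of the boundary case $x_{0}=0$, $y_{0}\neq 0$ via a nonzero control at $x=0$ is in fact cleaner than the paper's.
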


The proof will be based on Proposition 6.32 in \cite{benaim2022markov}. 
In that aim, we introduce the control system associated to the solution $\left( X_t^\varepsilon ,Y_t^\varepsilon \right)$ of  \eqref{sys_intro}, defined by:

\begin{equation}
	\left\{
	\begin{aligned}
		dx^{\varphi}_t&= \lambda_1x^{\varphi}_t(1-(x^{\varphi}_t)^2)dt+\varepsilon (\sigma_{0}+ \sigma_{1}x) \cos(\theta)\varphi(t)dt \\ 
		dy^{\varphi}_t&= -\lambda_2 y^{\varphi}_t+\lambda_3x^{\varphi}_t(1-(x^{\varphi}_t)^2)dt+\varepsilon (\sigma_{0}+ \sigma_{1}x) \sin(\theta)\varphi(t)dt, \\
	\end{aligned}
	\right.  
	\label{syst_controle_chp3}
\end{equation}
where the control function $\varphi$ is a piecewise continuous  function defined on $\mathbb{R}_+$.\\
 
\begin{proof}
    Using Proposition 6.32 from \cite{benaim2022markov}, a point $z\in \mathbb{R}^2$ is accessible if and only if  for every neighborhood $U$ of $z$, there exist a control function $\varphi$ and a time $t\geq 0$ such that $\left(x^{\varphi}_t,y^{\varphi}_t\right)\in U$.
    We will detail the proof for one of the two equilibria. Let's show that $z_1$ is accessible. 
       If the initial condition is taken in $z_1$'s basin of attraction, i.e. $x_0<0$, then  taking $\varphi\equiv 0$ makes $\left(x^{\varphi},y^{\varphi}\right)$ follow the flow and converge towards $z_1$. \\
    If the initial condition is taken in $z_3$'s basin of attraction, i.e. $x_0>0$, then similarly the flow \eqref{sys_main} converges towards $z_3$. Thus we can assume without loss of generality that  $x_0= 1+\delta$, where $\delta\neq 0$ will be conveniently chosen. We are looking for a control function $\varphi$ allowing the flow \eqref{syst_controle_chp3}  to attain the attraction basin of $z_{1}$.     Since by Hypothesis \ref{hypo_bruit}, $|\sigma_{1}|<\sigma_{0}$, there exists $\delta\neq 0$ such that for any $x\in [-1/2;1+\delta]$, $|\sigma_{0}+\sigma_{1}x|>0$. We fix the parameter $\delta$ and choose a real value $k$ such that for any $x\in [-1/2;1+\delta]$, 
    $$ \lambda_1x(1- x^2)- k \varepsilon (\sigma_{0}+ \sigma_{1}x) \cos(\theta)<-1.$$
   For example,
   any $$k> \frac{1+\lambda_{1}}{\Big(\sigma_{0}-|\sigma_{1}|(1+\delta)\Big)\cos\theta \varepsilon
    }$$
   is suitable.
   Taking the constant control  $\,\varphi(t)= -k$, one  obtains that starting from $(1+\delta,y)$ (for $y\in \mathbb{R}$), $dx^{\varphi}_t<0$ as soon as $x(t)\in [-1/2;1+\delta]$.  Therefore after a certain amount of time,
   $x(t)$ will become less than $-1/2$. In particular,  there exists $t_1>0$ such that $\left(x^{\varphi}_{t_1},y^{\varphi}_{t_1}\right)\in \mathbb{R}^-_{*}\times \mathbb{R}$. Once the dynamics is in $z_1$'s basin of attraction,  the flow will make it converge to $z_{1}$.\\
If $x_{0}=0$ and $y_{0}\neq 0$, then the dynamics of \eqref{sys_dyn}  (i.e. $\varphi=0$) will lead the system to be in the open half plan $\mathbb{R}^+_{*} \times \mathbb{R}$ or $\mathbb{R}^-_{*} \times \mathbb{R}$. We are back to one of the previous situations.

\end{proof}

\begin{proof}[Proof of the Theorem \ref{thm_mes_inv_unique}]
    Using the lemmas \ref{lem:access}, 
    \ref{lem:hormander} and \ref{pro_lya}, we can use the Theorems 6.34 and 8.15 from \cite{benaim2022markov}, which concludes the proof.
\end{proof}

\section{Limit and support of the invariant probability measure when $\varepsilon\to 0$}\label{section_limit_supp_inv_mes}

In this section, we explain the different steps of the proof of  Theorem \ref{thm_mes_lim_exp}. First, we show the tightness of $\left(\pi^\varepsilon\right)_{\varepsilon\le \varepsilon_0}$ and we identify the limiting values of $\left(\pi^\varepsilon\right)_{\varepsilon\le \varepsilon_0}$. In a second time, we adapt the work of Freidlin and Wentzel in \cite{freidlin1998random} in our degeneracy case to prove the uniqueness of the limit and to characterize it.\\ 

\subsection{Existence and characterization of the limiting values of $\left(\pi^\varepsilon\right)_{\varepsilon\le \varepsilon_0}$}

\begin{Lem}
\label{tight}
    The sequence $\left(\pi^\varepsilon\right)_{\varepsilon\le \varepsilon_0}$ is tight.
\end{Lem}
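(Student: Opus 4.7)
The plan is to deduce tightness directly from the uniform Lyapunov bound established in Lemma \ref{pro_lya}, namely $\mathcal{L}^\varepsilon W \leq \alpha_1 - \alpha_2 W$ with $\alpha_1,\alpha_2$ independent of $\varepsilon\le\varepsilon_0$. The coercivity of $W(x,y)=1+x^{4}+\alpha y^{2}$ will then transform a uniform moment bound $\sup_{\varepsilon}\int W\,d\pi^\varepsilon<\infty$ into uniform mass control on arbitrarily large compact sets, which is precisely tightness.

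First I would apply It\^o's formula to $W(X^\varepsilon_t,Y^\varepsilon_t)$ with the usual localization via exit times from balls (needed because $W$ is unbounded and the coefficients are polynomial), take expectation starting from an arbitrary deterministic point $(x_0,y_0)$, use the Lyapunov inequality under the integral sign, and let the localization parameter go to infinity using the already-recorded a priori bound $\sup_{\varepsilon\le\varepsilon_{0}}\mathbb{E}(\sup_{t\le T}W(X^\varepsilon_t,Y^\varepsilon_t))<+\infty$. This yields the differential inequality $\tfrac{d}{dt}\mathbb{E}_{(x_0,y_0)}[W(X^\varepsilon_t,Y^\varepsilon_t)]\le \alpha_1-\alpha_2\,\mathbb{E}_{(x_0,y_0)}[W(X^\varepsilon_t,Y^\varepsilon_t)]$, whence by Gronwall
\[
\mathbb{E}_{(x_0,y_0)}\bigl[W(X^\varepsilon_t,Y^\varepsilon_t)\bigr]\ \le\ W(x_0,y_0)\,e^{-\alpha_2 t}+\frac{\alpha_1}{\alpha_2}\bigl(1-e^{-\alpha_2 t}\bigr).
\]

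Second, I would pass this bound to the invariant measure. By Theorem \ref{thm_mes_inv_unique}, $P^\varepsilon_t((x_0,y_0),\cdot)$ converges to $\pi^\varepsilon$ in the norm $\|\cdot\|_W$, hence in particular weakly. Since $W$ is non-negative and continuous, Fatou's lemma applied along $t\to\infty$ gives
\[
\int_{\mathbb{R}^2}W\,d\pi^\varepsilon\ \le\ \liminf_{t\to\infty}\int_{\mathbb{R}^2}W(x,y)\,P^\varepsilon_t((x_0,y_0),dx\,dy)\ \le\ \frac{\alpha_1}{\alpha_2},
\]
with a right-hand side independent of $\varepsilon\le\varepsilon_0$. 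Then, since $W(x,y)\to+\infty$ as $\|(x,y)\|\to\infty$, the sublevel sets $K_R=\{W\le R\}$ are compact, and Markov's inequality yields $\pi^\varepsilon(\mathbb{R}^2\setminus K_R)\le (\alpha_1/\alpha_2)/R$ uniformly in $\varepsilon$, establishing tightness.

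I do not expect any genuine obstacle: the one point requiring mild care is justifying that the martingale part of It\^o vanishes in expectation on the way to the differential inequality, which is handled by the standard localization argument together with the already-available uniform moment control noted between Lemma \ref{pro_lya} and Theorem \ref{thm_mes_inv_unique}. Everything else is a direct consequence of the Lyapunov structure and the coercivity of $W$.
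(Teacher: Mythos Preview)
Your argument is correct and rests on the same two inputs as the paper's proof: the uniform Lyapunov inequality of Lemma~\ref{pro_lya} and the exponential ergodicity of Theorem~\ref{thm_mes_inv_unique}. The packaging, however, differs slightly. The paper works with the occupation measures
\[
r^\varepsilon(t,(x_0,y_0),A)=\frac{1}{t}\int_0^t P^\varepsilon_s(\mathbb{1}_A)(x_0,y_0)\,ds,
\]
bounds $r^\varepsilon(t,(x_0,y_0),\mathbb{R}^2\setminus\mathfrak{B}(R))$ directly from It\^o's formula and the inequality $\mathcal{L}^\varepsilon W\le\alpha_1-\alpha_2 W$ (after localizing with the exit times $\tau_n$), and then compares $r^\varepsilon$ to $\pi^\varepsilon$ using the $\|\cdot\|_W$-convergence of Theorem~\ref{thm_mes_inv_unique} time-averaged over $[0,t]$. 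You instead close the Lyapunov inequality via Gronwall to get the pointwise bound $\mathbb{E}_{(x_0,y_0)}[W(X^\varepsilon_t,Y^\varepsilon_t)]\le W(x_0,y_0)e^{-\alpha_2 t}+\alpha_1/\alpha_2$, pass to $\pi^\varepsilon$ by Fatou along $t\to\infty$, and finish with Markov's inequality on the sublevel sets of $W$. Your route is marginally more direct and yields the clean uniform moment bound $\int W\,d\pi^\varepsilon\le\alpha_1/\alpha_2$, while the paper's occupation-measure argument avoids invoking Gronwall; both are standard and equally valid executions of the same Lyapunov strategy.
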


\begin{proof}
Let us denote by $\mathfrak{B}(R))$ the ball of  $\mathbb{R}^2$ centered at $0$ with radius $R$. 
In order to show that the sequence $\left(\pi^\varepsilon\right)_{\varepsilon\le \varepsilon_0}$ is tight, we introduce the occupation measure $r^\varepsilon$ defined for any $(x_0,y_0)\in \mathbb{R}^2$ and $t\in \mathbb{R}_{+}$ and $A$ a Borel subset of $\mathbb{R}^2$ by 
 $$r^\varepsilon(t,(x_0,y_0),A) = \displaystyle \frac{1}{t}\int^t_0 P^{\varepsilon}_s\left( \mathbb{1}_{A }\right)(x_0,y_0)ds=\displaystyle\frac{1}{t}\int^t_0 \mathbb{E}_{(x_0,y_0)}\left( \mathbb{1}_{A}(X_s^\varepsilon,Y_s^\varepsilon)\right) ds.$$
 Then
\begin{equation}
\begin{array}{rl}
	r^\varepsilon(t,(x_0,y_0),\mathbb{R}^2\setminus \mathfrak{B}(R))	=&\displaystyle\frac{1}{t}\int^t_0 \mathbb{E}_{(x_0,y_0)}\left( \mathbb{1}_{\left\lbrace  \mathbb{R}^2\setminus\mathfrak{B}(R)\right\rbrace }(X_s^\varepsilon,Y_s^\varepsilon)\right) ds.\\
\end{array}
\label{mesure_occupation_eds}
\end{equation}

Let us prove  that $\forall\ \delta>0,\exists\ R>0$ such that $\forall\ t>0,\ \forall\  \varepsilon\in ]0,\varepsilon_0]$,

\begin{equation*}
r^\varepsilon(t,(x_0,y_0),\mathbb{R}^2\setminus \mathfrak{B}(R))\leq \delta.
\label{mesure_occ_tension_pi_eps}
\end{equation*}

We introduce for any $\varepsilon\le \varepsilon_0$  the sequence of stopping times $(\tau_n^{\varepsilon})_{n}$ defined by $\tau^{\varepsilon}_n=\inf\{t\geq 0 :\Vert\left(X^{\varepsilon}_t, Y^{\varepsilon}_t \right)\Vert\geq n\}$. The initial condition of the process being the fixed vector $(x_{0},y_{0})$, we have seen that Section 2 that $\sup_{\varepsilon\le \epsilon_{0}}\mathbb{E}(\sup_{{t\le T}}W(X^\varepsilon_t,Y^\varepsilon_t))<+\infty$. It is then easy to prove that for any $\varepsilon$, the sequence $(\tau^\varepsilon_{n})_{n}$ tends almost surely to infinity. We have  that 
\begin{equation}
\mathbb{E}_{(x_0,y_0)}\Big(W\left( X^\varepsilon_{t\wedge \tau_n},Y^\varepsilon_{t\wedge \tau_n}\right)\Big) = W(x_0,y_0)+\mathbb{E}_{(x_0,y_0)}\left(\int_0^{t\wedge \tau_n} \mathcal{L}^{\varepsilon} W\left( X^\varepsilon_s,Y^\varepsilon_s\right) ds \right).
\label{preuve_tension_ito}
\end{equation}
Note that  the function $(x,y)\in \mathbb{R}^2 \to W(x,y)$ is continuous and lower bounded by $m_{R}>0$ on the complementary of the open ball of radius $R$.  Note also that $m_{R}$ tends to infinity with $R$. Therefore, using Lemma \ref{pro_lya}, we obtain 
\begin{equation*}
	\displaystyle \mathbb{E}_{(x_0,y_0)}\Big(W\left( X^\varepsilon_{t\wedge \tau_n},Y^\varepsilon_{t\wedge \tau_n}\right) \Big)\leq W(x_0,y_0)+ \alpha_1\ \displaystyle \mathbb{E}_{(x_0,y_0)}(t\wedge \tau_n)\displaystyle-\alpha_2 \,m_{R}\,\mathbb{E}_{(x_0,y_0)}\left(\int_0^{t\wedge \tau_n} \mathbb{1}_{\left\lbrace ||(X^\varepsilon_s,Y^\varepsilon_s)||>R \right\rbrace} ds \right),
	\end{equation*}
	from which we deduce that 
	\begin{eqnarray*}
	\displaystyle \alpha_2 \,m_{R}\,\mathbb{E}_{(x_0,y_0)}\left(\int_0^{t\wedge \tau_n} \mathbb{1}_{\left\lbrace ||(X^\varepsilon_s,Y^\varepsilon_s)||>R \right\rbrace} ds \right) &\leq& W(x_0,y_0)+\alpha_1\ \displaystyle \mathbb{E}_{(x_0,y_0)}(t\wedge \tau_n),\end{eqnarray*} 
 since $\,W\,$ is positive.
Finally making  $n$ tend to infinity, we obtain that
\begin{equation}
r^\varepsilon\left( t,(x_0,y_0),\mathbb{R}^2\setminus \mathfrak{B}(R) \right)\leq \frac{t\alpha_1+W(x_0,y_0)}{ t\alpha_2  \,m_{R}}.
\label{preuve_tension_maj}
\end{equation}
Using Theorem \ref{thm_mes_inv_unique}, we have
$$\pi^\varepsilon(\mathbb{R}^2\setminus \mathfrak{B}(R) )\leq r^\varepsilon\left( t,(x_0,y_0),\mathbb{R}^2\diagdown \mathfrak{B}(R) \right) + CW(x_{0},y_{0}) \frac{1}{\gamma t} (1-e^{-\gamma t}).$$
The rhs term can be made as small as required for $t$ and $R$ large
enough, uniformly in $\epsilon$. 
Therefore the sequence $\left(\pi^\varepsilon\right)_{\varepsilon\le \varepsilon_0}$ is tight.

 \end{proof}

\begin{Lem}
    The limiting values of $\left(\pi^\varepsilon\right)_{\varepsilon\le \varepsilon_0}$  have the form
    \begin{equation}
		\sum_{i=1}^{3} a_i \delta_{z_i},
	\end{equation}
	where ${z_i}$, $i\in \{1,2,3\}$ are defined in \eqref{pt_eq_zi} and the coefficients $a_i\geq0$ satisfy $\displaystyle\sum_{i=1}^3 a_i=1.$ 
 \label{lem_cara_pi}
\end{Lem}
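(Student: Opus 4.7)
The plan is to extract a weakly convergent subsequence using the tightness from Lemma \ref{tight}, show that any such limit $\pi$ is invariant under the deterministic flow $\phi_t$ of \eqref{sys_main}, and then use the gradient-like structure of $\phi_t$ to conclude that $\pi$ is supported on the three equilibria $\{z_1,z_2,z_3\}$.

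First I would fix a subsequence $\varepsilon_k\to 0$ with $\pi^{\varepsilon_k}\rightharpoonup\pi$ and prove that $\pi$ is $\phi_t$-invariant. Since $\pi^\varepsilon$ is invariant for $(P^\varepsilon_t)$, for every $f\in C_c^\infty(\mathbb{R}^2)$ one has $\int \mathcal{L}^\varepsilon f\,d\pi^\varepsilon=0$. Splitting $\mathcal{L}^\varepsilon f = (b_1\partial_x f+b_2\partial_y f) + \varepsilon^2 S_f$, where the remainder $S_f$ involves second derivatives of $f$ multiplied by the factor $(\sigma_0+\sigma_1 x)^2$ that is bounded on $\mathrm{supp}(f)$, the first-order part is bounded continuous with compact support and passes to the limit by weak convergence, while the second part contributes $O(\varepsilon_k^2)$ and vanishes. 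This yields $\int(b_1\partial_x f+b_2\partial_y f)\,d\pi=0$ for every $f\in C_c^\infty$, that is, the infinitesimal invariance of $\pi$ under $\phi_t$. A standard density argument (differentiate $t\mapsto \int f\circ\phi_t\,d\pi$ for $f\in C_c^\infty$, then approximate bounded continuous functions) promotes this to $\int g\circ\phi_t\,d\pi=\int g\,d\pi$ for every bounded continuous $g$ and every $t\ge 0$.

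Next I would analyse $\phi_t$. The $x$-equation $\dot x=\lambda_1 x(1-x^2)$ is autonomous with $\lambda_1>0$, so $x(t)\to 1$, $x(t)\to -1$, or $x(t)\equiv 0$ according as $x_0>0$, $x_0<0$, or $x_0=0$. The resulting $y$-equation is an affine scalar ODE with exponentially decaying (or identically zero) forcing and positive decay rate $\lambda_2$, so $y(t)\to 0$ in each case. Hence $\phi_t(x_0,y_0)\to z_i$ as $t\to\infty$, with $i=1,2,3$ according as $x_0<0$, $x_0=0$, $x_0>0$; the three basins $B_i$ are measurable and partition $\mathbb{R}^2$.

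Finally, for any bounded continuous $g$, dominated convergence applied to $\int g\circ\phi_t\,d\pi=\int g\,d\pi$ (the integrand is bounded by $\|g\|_\infty$ and converges pointwise to $\sum_{i=1}^3 g(z_i)\mathbb{1}_{B_i}$) gives $\int g\,d\pi=\sum_{i=1}^3 g(z_i)\pi(B_i)$, so $\pi=\sum_{i=1}^3 a_i\delta_{z_i}$ with $a_i:=\pi(B_i)\ge 0$ and $\sum_i a_i=1$. The main obstacle I anticipate is the promotion of the infinitesimal invariance to full $\phi_t$-invariance: since the drift is unbounded, $f\circ\phi_t$ need not stay compactly supported, and one must combine the Lyapunov estimate from Lemma \ref{pro_lya} (which controls the tail of $\pi$ via $\|\pi\|_W<+\infty$) with a cut-off argument to pass rigorously from smooth compactly supported test functions to arbitrary bounded continuous $g$.
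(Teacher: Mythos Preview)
Your proposal is correct and follows essentially the same route as the paper: pass to the limit in $\int \mathcal{L}^\varepsilon f\,d\pi^\varepsilon=0$ to obtain $\int \mathcal{L}^0 f\,d\pi=0$, deduce invariance of $\pi$ under the deterministic flow, and send $t\to\infty$ with dominated convergence to collapse $\pi$ onto the three equilibria with weights $\pi(B_i)$. In fact you are more careful than the paper at the one delicate point --- the paper simply asserts that the time-integral term vanishes ``because $\pi$ is an invariant measure'', whereas you correctly flag that promoting infinitesimal invariance to $\int g\circ\phi_t\,d\pi=\int g\,d\pi$ requires an argument (since $f\circ\phi_t$ need not be compactly supported) and propose the right fix via the Lyapunov bound.
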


\begin{proof}
We introduce the operator $\mathcal{L}^0$ defined for all $f\in \mathcal{C}^\infty_c(\mathbb{R}^2)$ by 
\begin{equation}
\mathcal{L}^0f(x,y)=\lambda_1 x(1-x^2)\tfrac{\partial f}{\partial x}(x,y)-\left(\lambda_2 y-\lambda_3 x(1-x^2)\right)\tfrac{\partial f}{\partial y}(x,y).
\label{gen_lim}
\end{equation} 

Let us consider an accumulation point  $\pi$ of the sequence $\left(\pi^\varepsilon\right)_{\varepsilon\le\varepsilon_0}$. We will show that \begin{equation}
\int_{\mathbb{R}^2} \mathcal{L}^0f(x,y) \pi(dx,dy)=0, \quad \forall f\in \mathcal{C}^\infty_c(\mathbb{R}^2).
\label{condition_mes_lim}
\end{equation}

\vspace{0.5cm}

Let be $f\in \mathcal{C}^\infty_c(\mathbb{R}^2)$. We decompose $\displaystyle \int_{\mathbb{R}^2} \mathcal{L}^0f(x,y) \pi(dx,dy)$ as follows :\\
\begin{equation*}
\begin{array}{rcl}
	\displaystyle \int_{\mathbb{R}^2} \mathcal{L}^0f(x,y) \pi(dx,dy)& = &\displaystyle \int_{\mathbb{R}^2} \mathcal{L}^0f(x,y) \big( \pi(dx,dy)-\pi^{\varepsilon_k}(dx,dy)\big)\\
	\\
	&& \displaystyle + \int_{\mathbb{R}^2} \mathcal{L}^{\varepsilon_k} f(x,y) \pi^{\varepsilon_k}(dx,dy)\\
	\\
	&&\displaystyle+ \int_{\mathbb{R}^2} \left( \mathcal{L}^0 -\mathcal{L}^{\varepsilon_k}\right) f(x,y) \pi^{\varepsilon_k}(dx,dy).  \\
\end{array} 
\label{diff_mes_inv}
\end{equation*}

The sequence $(\pi^{\varepsilon_k})_{\varepsilon_k<\varepsilon_0}$ converges weakly to $\pi$ and the first term in the rhs of previous equation tends to 0 when $\varepsilon_k\underset{k\to +\infty}{\longrightarrow} 0$. The second term is null as $\pi^{\varepsilon_k}$ is an invariant measure for the diffusion operator $ \mathcal{L}^{\varepsilon_k}$. \\
Using a majoration of the third term, we obtain 
\begin{equation*}
\begin{array}{lcl}
\left|	\displaystyle \int_{\mathbb{R}^2} \left(\mathcal{L}^{\varepsilon_k} f(x,y)-\mathcal{L}^0f(x,y)\right) \pi^{\varepsilon_k} (dx,dy) \right|&\leq& \displaystyle \frac{\varepsilon_k^2}{2} \int_{\mathbb{R}^2} (\sigma_{0}+\sigma_{1}x)^2\left| \frac{\partial^2 f(x,y)}{\partial x^2}+\frac{\partial^2 f(x,y)}{\partial y^2} \right| \pi^{\varepsilon_k} (dx,dy) \\
	&&\\ 
	&\leq&\displaystyle  C_f\ \varepsilon_k^2,\\		
\end{array}
\end{equation*}
where $\displaystyle C_f=\frac{1}{2}\underset{(x,y)\in\mathbb{R}^2}{\sup} \left((\sigma_{0}+\sigma_{1}x)^2\left| \frac{\partial^2 f(x,y)}{\partial x^2}+\frac{\partial^2 f(x,y)}{\partial y^2} \right| \right)<+\infty$ since $f$ has a compact support. \\
That concludes the proof of \eqref{condition_mes_lim}. \\
 
The operator $\mathcal{L}^0$ defined by \eqref{gen_lim} is associated with the deterministic system \eqref{sys_dyn}. We have for all $f\in \mathcal{C}^\infty_c(\mathbb{R}^2)$
\begin{equation*}
f(x_t,y_t)=f(x_0,y_0)+\int_0^t \mathcal{L}^0f(x_s,y_s) ds, 
\end{equation*}
where $(x_0,y_0)\in \mathbb{R}^2$ is the initial condition.

Integrating the initial condition with respect to the measure $\pi$, we obtain  
\begin{equation*}
\int_{\mathbb{R}^2} f(x_t,y_t)\pi(dx_0,dy_0)=\int_{\mathbb{R}^2} f(x_0,y_0)\pi(dx_0,dy_0)+\int_{\mathbb{R}^2} \int_0^t \mathcal{L}^0f(x_s,y_s) ds\ \pi(dx_0,dy_0),
\end{equation*}
where the last term vanishes because $\pi$ is an invariant measure. 


Taking the limit  when $t\to +\infty$, using the different basins of attraction and Lebesgue's theorem, we obtain that
\begin{align*}
& \int_{\mathbb{R}^2} f(x_0,y_0)\pi(dx_0,dy_0)  \\
& =\int_{\mathbb{R}^2} f\left(z_1\mathbb{1}_{(x_0<0)}+ z_2\mathbb{1}_{(x_0=0)}+z_3\mathbb{1}_{(x_0>0)}\right) \pi(dx_0,dy_0)\\
&= f(z_{1})\pi(\mathbb{R}_{-}\times \mathbb{R}) + f(z_{2})\pi(\{0\}\times \mathbb{R}) + f(z_{3})\pi(\mathbb{R}_{+}\times \mathbb{R}),
\label{support_mes_inv}
\end{align*}
which concludes the proof.
\end{proof}

\subsection{Uniqueness and characterization of the limiting values of $\left(\pi^\varepsilon\right)_{\varepsilon\le \varepsilon_0}$}

We will adapt in our degeneracy framework the method introduced in \cite{freidlin1998random}. Here the diffusion matrix of System \eqref{sys_intro} is not invertible. Our strategy will be uniquely based on the first component of the system. Indeed the latter is the principal component of  the stochastic differential system \eqref{sys_intro}, the drift of the second component pushing it toward $0$.
The equilibria of the associated dynamical system \eqref{sys_main} are aligned on the horizontal axis and the cost  of trajectories will be reduced to the movement on the horizontal axis, allowing a reduction of the two-dimensional problem to a one-dimensional study. 

The diffusion matrix is given by
$$
\sigma(x,y) = (\sigma_{0}+\sigma_{1}x)\left(\begin{array}{cc}\cos \theta&0\\\sin \theta&0\end{array}\right).
$$
Using   Theorem 5.6.7 in \cite{dembozeitouni} and the specific form of the drift $(b_{1},b_{2})$, we obtain that the action functional is given by
\begin{eqnarray*}I_{(x_{0},y_{0})}(f_{1}, f_{2}) =&& \inf\Big\{\int_{0}^T |\dot g(s)|^2ds\ ;\ g_{1}\in H_{1},  f_{1}(t) = x_{0}+ \int_{0}^t b_{1}(f_{1}(s))ds + \int_{0}^t \sigma_{11}(f_{1}(s))g_{1}(s)ds \ \hbox{ and } \\
&& \  f_{2}(t) = y_{0}+ \int_{0}^t b_{2}(f_{1}(s),f_{2}(s))ds + \int_{0}^t \sigma_{21}(f_{1}(s))g_{1}(s)ds\Big\},
\end{eqnarray*}
and $I_{(x_{0},y_{0})}(f_{1}, f_{2}) =+\infty$ otherwise. This minimization problem is solved by considering first the constraint on the first coordinate, and then by seeing if the second constraint is satisfied or not. Minimization under the first constraint is known to be equivalent to minimize the Freidlin-Wentzell functional (see \cite{dembozeitouni}, remark p. 214). If the second constraint is not satisfied for the minimizer, the action functional will be taken to be $+\infty$. 

Therefore, as seen in \cite{freidlin1998random} (see also \cite{dembozeitouni} Theorem 5.6.12), the behavior of probabilites of
large deviations from the most "probable" trajectory in $x$, i.e. the trajectory of the dynamical system \eqref{sys_main}, can be described thanks to the action functional  $S^{\sigma}_{0T}$ acting on smooth functions $w$ from $[0,T]$ to $\mathbb{R}$, and defined for   $\varepsilon\in ]0,\varepsilon_0]$ and $\theta\in ]-\pi,\pi[\diagdown \left\lbrace \pm\frac{\pi}{2} \right\rbrace$ by
\begin{equation}
S^{\sigma}_{0T}((w_t)_{t\geq 0}):=\frac{1}{(\varepsilon\cos(\theta))^2}\int_0^T L^{\sigma}(w_s,\dot{w}_s)ds,
\label{fct_action}
\end{equation}
where the lagrangien $L^{\sigma}(t,w_t,\dot{w}_t)$ is defined as 
\begin{equation*}
L^{\sigma}(w_{t},\dot{w}_{t})=\frac{1}{2}\left(\frac{\dot{w}_{t}-\lambda_{1}w_{t}(1-w_t^2)}{\sigma_{0}+\sigma_{1} w_t}\right)^2.
\end{equation*}

Let introduce a parameter $\displaystyle \delta \in \left]0,1/2\right[$. We consider three  disjoints  subsets of  $\mathbb{R}^2$,  $K^\delta_1$, $K^\delta_2$ and $K^\delta_3$ defined by 
\begin{equation}
K^\delta_i=\{(x,y)\in \mathbb{R}^2, | x-x_i|^2\leq \delta^2\}, 
\end{equation}
where $x_i$, $i\in \left\lbrace 1,2,3 \right\rbrace $ is the first component of  $z_i$ defined in \eqref{pt_eq_zi}. 
Note that these sets are bands and not compact as in
\cite{freidlin1998random}, however their basis in $x$ are compact
intervals.  Due to the particular form of the system
\eqref{sys_intro}, entrances and exits in these bands are described by
the $x$ coordinate only which obeys a stochastic differential equation
independent of $y$. 
Let us fix $T>0$. \\

 An essential role will be played by the function characterizing the difficulty of passage from $K_i^\delta$ to a  $K_j^\delta$, defined by
\begin{equation*}
	V^{\sigma}_{ij}:=\inf \left\lbrace S^{\sigma}_{0T}((w_t)_{t\geq 0});  (w_0,0)\in K_i^\delta,\ (w_T,0)\in K_j^\delta \right\rbrace .
\end{equation*}
The global passage cost  to  $K_j^\delta$ is the quantity
$$W^\sigma(K_j^\delta) = \min_{g\in G(i)}\sum_{(m\to n )\in g}V_{mn}^\sigma,$$
where for $i\in \{1,2,3\}$, $G(i)$ is the set of graphes from $\{1,2,3\}\setminus\{i\}$ to $\{1,2,3\}$ without cycles and such that any point of  $\{1,2,3\}\setminus\{i\}$ is the initial point of exactly one arrow. This quantity will play a main role in the following.

Let us also introduce the cost matrix
$P^\sigma=((V^{\sigma}_{ij}))_{i,j\in\{1,2,3\}}$. Our aim in to
compute this matrix or at least to know which coefficients are
positive. Indeed, if for any $j\neq i$, $V^{\sigma}_{ij}>0$,  the set
$K_i^\delta$ is said stable. In the contrary, this set is  unstable. These properties will be related to the long time behavior of the flow. 

We recall the well known following  property concerning the extrema of the action functional.
\begin{Pro} Let us assume that $\sigma_{1}=0$. 
The extrema of the functional $S^{\sigma_{0}}_{0T}$ are attained by the two flows solution of
$$\dot{w}_{t}=\pm \lambda_{1}w_{t}(1 - w_{t}^2).$$
\end{Pro}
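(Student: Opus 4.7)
The plan is to write down the Euler--Lagrange equation associated with the Lagrangian
\[
L^{\sigma_0}(w,\dot w) = \frac{1}{2\sigma_0^2}\bigl(\dot w - b(w)\bigr)^2, \qquad b(w) := \lambda_1 w(1-w^2),
\]
and to identify its extremals. A direct computation yields $\partial_{\dot w}L^{\sigma_0} = \sigma_0^{-2}(\dot w - b(w))$ and $\partial_w L^{\sigma_0} = -\sigma_0^{-2}(\dot w - b(w))b'(w)$, so that, after expansion of $\frac{d}{dt}\partial_{\dot w}L^{\sigma_0}$ and cancellation of the cross terms in $b'(w)\dot w$, the Euler--Lagrange equation reduces to the autonomous second-order ODE $\ddot w = b(w)b'(w) = \tfrac12 \frac{d}{dw}b(w)^2$.

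Because $L^{\sigma_0}$ is independent of $t$, this ODE admits a first integral, obtained by multiplying by $\dot w$ and integrating once: $\dot w^2 - b(w)^2 \equiv C$ for some constant $C$. Imposing $C = 0$ reduces the equation to the two first-order equations $\dot w = \pm b(w) = \pm\lambda_1 w(1-w^2)$; direct substitution confirms that any solution of either equation satisfies $\ddot w = b(w)b'(w)$ and is therefore an extremum of $S^{\sigma_0}_{0T}$.

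To explain why these two flows are the relevant extrema, I would exploit the algebraic identity
\[
(\dot w - b(w))^2 = (\dot w + b(w))^2 - 4\, \dot w\, b(w),
\]
together with the observation that $\int_0^T \dot w\, b(w)\, dt = B(w(T)) - B(w(0))$, with $B$ any primitive of $b$, is a pure boundary term. Modulo this contribution the action $S^{\sigma_0}_{0T}$ equals a constant multiple either of $\int_0^T (\dot w - b(w))^2\, dt$ or of $\int_0^T (\dot w + b(w))^2\, dt$; both integrands are nonnegative and vanish identically along $\dot w = b(w)$ and $\dot w = -b(w)$ respectively, so these two trajectories realize the natural lower bounds for $S^{\sigma_0}_{0T}$ among paths with prescribed endpoints.

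The main obstacle is to interpret the word \emph{extrema} in the statement: the Euler--Lagrange equation admits a one-parameter family of solutions indexed by $C \in \mathbb{R}$, whereas the proposition singles out only $C=0$. I expect this restriction to be justified by the Freidlin--Wentzell context in which the functional will be used in the sequel, namely the minimization of the action over paths connecting the equilibria $z_1, z_2, z_3$, at which $b$ vanishes; at such endpoints the first integral becomes $\dot w^2 = C$, and requiring the path to minimize the action among those with given boundary values forces $C = 0$, hence the deterministic flow or its time-reversal.
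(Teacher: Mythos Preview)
Your proof is correct, and its third paragraph is exactly the paper's argument: the paper writes only the identity
\[
\int_0^T(\dot w - F(w))^2\,dt = \int_0^T(\dot w + F(w))^2\,dt - 4\bigl(V(w_T)-V(w_0)\bigr),
\]
with $F(w)=\lambda_1 w(1-w^2)$ and $V$ a primitive of $F$, and concludes. Your Euler--Lagrange derivation in the first two paragraphs is additional, and your final paragraph correctly flags an imprecision in the statement: the Euler--Lagrange equation $\ddot w = b(w)b'(w)$ has a full one-parameter family of solutions indexed by the energy $C$, so the word ``extrema'' is being used loosely. What the algebraic identity actually proves---and what the paper uses downstream---is that, among paths with prescribed endpoints $w_0$ and $w_T$, the minimum of $S^{\sigma_0}_{0T}$ is achieved by the flow $\dot w = b(w)$ or its time-reversal $\dot w = -b(w)$, whichever connects the endpoints; this is the content of the proposition in practice. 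Your diagnosis that the restriction to $C=0$ is enforced by the Freidlin--Wentzell minimization over paths joining equilibria is exactly right.
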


When the sign is $+$,  the extremum is attained by the solution of the dynamical system \eqref{sys_main} and when the sign is $-$, the dynamics realizing the extremum has to go against the flow. 

\begin{proof} Let us give an elementary proof (see also Theorem 3.1 of Chapter 4 in \cite{freidlin1998random}). 
Denoting $F(w) = \lambda_{1} w(1-w^2)$, we have
$$\int_{0}^T (\dot w_{t} -F(w_{t}))^2 dt = \int_{0}^T (\dot w_{t} +F(w_{t}))^2 dt - 4 \int_{0}^T \dot w_{t} F(w_{t}) dt = \int_{0}^T (\dot w_{t} +F(w_{t}))^2 dt - 4 (V(w_{T})  - V(w_{0})),$$
where V is a primitive of $F$.
The result follows.
\end{proof}

\begin{Pro} Let us assume that $\sigma_{1}=0$. 
The solution of $\,\dot{w}_{t}=- \lambda_{1}w_{t}(1 - (w_{t})^2)\,$ is given by $x^\varphi$, with
$$\varphi(t) =- \frac{2 \lambda_{1} w_{0}(1-w^2_{0}) e^{-\lambda_{1}t}}{\varepsilon\sigma_{0}\cos(\theta)\Big(1-w_0^2+w_0^2e^{-2\lambda_{1}t}\Big)^{3/2}}.$$
We also have in this case
$$S^{\sigma_{0}}_{{0T}}(w)= \frac{\lambda^2_{1}(1-w^2_{0})^2}{2(\varepsilon\sigma_{0}\cos(\theta))^2}\left(\frac{1}{\big(1-w_0^2+w_0^2e^{-2\lambda_{1}T}\big)^{2}}-1\right).$$
\end{Pro}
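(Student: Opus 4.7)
The plan is in three steps: integrate the ODE explicitly, read off $\varphi$ by forcing the controlled equation to reproduce $w$, then plug back into the action and perform a single quadrature. The whole argument rests on one change of variable that simultaneously linearises the ODE and the final integral.

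First I would solve $\dot w_t = -\lambda_1 w_t(1-w_t^2)$ by setting $u(t) = 1/w(t)^2$. Differentiating gives the linear ODE $\dot u = 2\lambda_1(u-1)$, hence $u(t) = 1 + \frac{1-w_0^2}{w_0^2} e^{2\lambda_1 t}$. Inverting this, and choosing the branch of the square root consistent with the sign of $w_0$, yields
$$w(t) = \frac{w_0\, e^{-\lambda_1 t}}{\sqrt{D(t)}}, \qquad 1-w(t)^2 = \frac{1-w_0^2}{D(t)}, \qquad \text{with } D(t) := 1-w_0^2 + w_0^2 e^{-2\lambda_1 t}.$$
Since $|w_0|<1$, $D(t)>0$ for every $t\ge 0$, so the solution is globally defined.

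Second, to identify $\varphi$, I would use the controlled equation \eqref{syst_controle_chp3} specialised to $\sigma_1=0$: imposing $x^{\varphi}\equiv w$ forces $\varepsilon\sigma_0\cos(\theta)\,\varphi(t) = \dot w_t - \lambda_1 w_t(1-w_t^2) = -2\lambda_1 w_t(1-w_t^2)$. Substituting the explicit formulas for $w_t$ and $1-w_t^2$ into the right hand side produces the announced expression for $\varphi$.

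Third, the same identity $\dot w - \lambda_1 w(1-w^2) = -2\lambda_1 w(1-w^2)$ reduces the lagrangian to $2\lambda_1^2 w_s^2(1-w_s^2)^2/\sigma_0^2$, so that $S^{\sigma_0}_{0T}(w)$ is a scalar multiple of $\int_0^T w_s^2(1-w_s^2)^2\,ds$. Using the explicit formulas for $w$, the integrand becomes $w_0^2(1-w_0^2)^2 e^{-2\lambda_1 s}/D(s)^3$, and the substitution $u=D(s)$ (with $du = -2\lambda_1 w_0^2 e^{-2\lambda_1 s}\,ds$ and $D(0)=1$) converts the integral into an elementary one in $1/u^3$ between $1$ and $D(T)$. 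Collecting the constants yields the closed form stated.

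The whole argument is computational; there is no conceptual obstacle. The mildest subtleties are selecting the correct branch of the square root for $w(t)$ so as to recover the sign of $w_0$, and tracking the orientation of the substitution $u=D(s)$ in the final quadrature.
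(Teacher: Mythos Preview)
Your argument is correct and matches the paper's proof closely. The only cosmetic differences are that you derive the explicit solution via the linearising substitution $u=1/w^{2}$ whereas the paper simply writes down $w_t$ and checks it, and that the paper adds a second, equivalent evaluation of the action integral via the change of variable $u=w_s^2$ (leading to $\frac{\lambda_1}{2(\varepsilon\sigma_0\cos\theta)^2}\big((w_T^2-1)^2-(w_0^2-1)^2\big)$), which you could mention as a cross-check.
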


\begin{proof}
A direct computation gives that
$$w_{t} = w_{0}\frac{e^{-\lambda_{1}t}}{\sqrt{1-w_0^2+w_0^2e^{-2\lambda_{1}t}}} $$
and then
$$\dot{w}_{t} = -\lambda_{1} w_{t}(1 - w_{t}^2) = - \lambda_{1} w_{0}(1-w^2_{0})\frac{e^{-\lambda_{1}t}}{\big(1-w_0^2+w_0^2e^{-2\lambda_{1}t}\big)^{3/2}},$$
from which we deduce $$\varphi(t) = \frac{-2\lambda_{1}w_{t}(1-w^2_{t})}{\varepsilon\sigma_{0}\cos(\theta)}= - \frac{2 \lambda_{1} w_{0}(1-w^2_{0}) e^{-\lambda_{1}t}}{\varepsilon\sigma_{0}\cos(\theta)\big(1-w_0^2+w_0^2e^{-2\lambda_{1}t}\big)^{3/2}}.$$

We are now able to compute explicitely the value of the action
functional on these extrema. Of course, it vanishes  on the solution of \eqref{sys_main}. For $w$ solution of $\dot{w}_{t} = -\lambda_{1} w_{t}(1 - (w_{t})^2)$, 
\begin{eqnarray*}S^{\sigma_{0}}_{{0T}}(w)&=& \frac{1}{2(\varepsilon\sigma_{0}\cos(\theta))^2}\int_{0}^T (-2\lambda_{1} w_{t}(1 - w_{t}^2))^2dt\\
&=& \frac{4\lambda^2_{1}}{2(\varepsilon\sigma_{0}\cos(\theta))^2}\int_{0}^T w_{0}^2(1-w^2_{0})^2\frac{e^{-2\lambda_{1}t}}{\big(1-w_0^2+w_0^2e^{-2\lambda_{1}t}\big)^{3}}dt\\
&=& \frac{\lambda^2_{1}(1-w^2_{0})^2}{2(\varepsilon\sigma_{0}\cos(\theta))^2}\left(\frac{1}{\big(1-w_0^2+w_0^2e^{-2\lambda_{1}T}\big)^{2}}-1\right).
\end{eqnarray*}

\medskip
Note that another way to compute $S^{\sigma_{0}}_{{0T}}$ is as follows: 

\begin{eqnarray}S^{\sigma_{0}}_{{0T}}(w)&=& \frac{1}{2(\varepsilon\sigma_{0}\cos(\theta))^2}\int_{0}^T (-2\lambda_{1} w_{t}(1 - w_{t}^2))^2dt\nonumber\\
&=&  \frac{1}{2(\varepsilon\sigma_{0}\cos(\theta))^2}\int_{0}^T 2\dot{w}_{t}(-2\lambda_{1} w_{t}(1 - w_{t}^2))dt\nonumber\\
&=&  \frac{2\lambda_{1}}{2(\varepsilon\sigma_{0}\cos(\theta))^2}\int_{w_{0}^2}^{w_{T}^2}du(u-1)du\nonumber\\
&=&  \frac{\lambda_{1}}{2(\varepsilon\sigma_{0}\cos(\theta))^2} \Big((w^2_{T}-1)^2 -(w^2_{0}-1)^2\Big).\label{exactvalue}
\end{eqnarray}

\end{proof}

The explicit form of the extrema of $S^{\sigma_{0}}_{0T}$ allows us to
compute or estimate  the passage cost from one  $K_{i}^\delta$ to another one. 
It is immediate to observe that for any $i\in \{1,2, 3\}$, 
$$V^{\sigma_{0}}_{2i}=V^{\sigma_{0}}_{ii}= 0,$$
since the flow \eqref{sys_main} allows to go from $K_{2}^\delta$ to $K_{1}^\delta$ or $K_{3}^\delta$, and of course to go from $K_{i}^\delta$ to $K_{i}^\delta$.

The minimum on $w_{T}\in K_{2}^\delta$ and $w_{0}\in K_{3}^\delta$ in the expression \eqref{exactvalue} is positive since it is attained as   minimum of a positive function on a compact set. Then $$V^{\sigma_{0}}_{32} >0.$$ 
Furthermore following  the notation in \cite{freidlin1998random} p.150,  we also have $\widetilde V^{\sigma_{0}}_{31}= \infty$ and then from the expression in \cite{freidlin1998random} p.152,
$$V^{\sigma_{0}}_{31} = V^{\sigma_{0}}_{32} >0.$$
 Similar arguments yield 
 $$ V^{\sigma_{0}}_{13}= V^{\sigma_{0}}_{12} >0$$
and since the diffusion coefficient $\sigma_{0}$ is constant, we also have by symmetry of the minimization problem that
$$V^{\sigma_{0}}_{32} =V^{\sigma_{0}}_{12}.$$

Therefore, we can state the following lemma. 
\begin{Lem}  Let us assume that $\sigma_{1}=0$. 
    Under Hypothesis \ref{hypo_bruit}, the cost matrix of \eqref{sys_intro} is defined by
\begin{equation}
	P^\sigma=\begin{pmatrix}
		0 & V^{\sigma_{0}}_{12} & V^{\sigma_{0}}_{12}\\
		0 &0  & 0\\
		V^{\sigma_{0}}_{12} & V^{\sigma_{0}}_{12} & 0\\
	\end{pmatrix},
	\label{pro_mat_cout}
\end{equation}
\label{lem_mat_cost}
\end{Lem}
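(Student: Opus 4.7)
The plan is to verify the nine entries of $P^\sigma$ one row at a time, using the two ingredients already established: (i) the explicit value \eqref{exactvalue} of the action $S_{0T}^{\sigma_0}$ on the extremal time-reversed trajectory $\dot w = -\lambda_1 w(1-w^2)$, and (ii) the symmetries of the one-dimensional problem that arise when $\sigma_1 = 0$, namely the drift $b(w)=\lambda_1 w(1-w^2)$ is odd and the diffusion coefficient is the constant $\sigma_0$.

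First, I would dispatch the zero entries. The diagonal $V_{ii}^{\sigma_0}=0$ is immediate by taking the constant curve $w_t \equiv x_i$, which has zero action when $b(x_i)=0$; this covers $i=1,2,3$. For $V_{21}^{\sigma_0}$ and $V_{23}^{\sigma_0}$, I would take an initial point $(w_0,0)\in K_2^\delta$ with $w_0\neq 0$ and use the deterministic flow \eqref{sys_main}, which belongs to the basin of $z_1$ (resp. $z_3$) and therefore enters $K_1^\delta$ (resp. $K_3^\delta$) at some finite time while having $S_{0T}^{\sigma_0}=0$ along the way; taking the infimum yields $V_{21}^{\sigma_0}=V_{23}^{\sigma_0}=0$.

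Next, for the positive entries I would concentrate on $V_{32}^{\sigma_0}$ and obtain the others by symmetry/concatenation. By the previous proposition, any minimizer of $S_{0T}^{\sigma_0}$ connecting $K_3^\delta$ to $K_2^\delta$ must be an extremal, and since the deterministic flow cannot reach $K_2^\delta$ from $K_3^\delta$, the relevant extremal is $\dot w = -\lambda_1 w(1-w^2)$ whose action is given by \eqref{exactvalue}. Thus
\[
V_{32}^{\sigma_0} \;=\; \inf_{\substack{w_0\in[1-\delta,1+\delta]\\ w_T\in[-\delta,\delta]}} \frac{\lambda_1}{2(\varepsilon\sigma_0\cos\theta)^2}\Big((w_T^2-1)^2-(w_0^2-1)^2\Big).
\]
This is the minimum of a function continuous on a compact set, and since $(w_T^2-1)^2 \ge (1-\delta^2)^2$ while $(w_0^2-1)^2 \le \delta^2(2+\delta)^2$, the infimum is strictly positive for $\delta\in (0,1/2)$. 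The $x \mapsto -x$ symmetry of the drift and the fact that $\sigma_0$ does not depend on $x$ give an action-preserving involution on admissible paths, exchanging $K_1^\delta \leftrightarrow K_3^\delta$ while fixing $K_2^\delta$; this yields $V_{12}^{\sigma_0}=V_{32}^{\sigma_0}$.

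Finally, I would handle $V_{13}^{\sigma_0}$ and $V_{31}^{\sigma_0}$ by the standard Freidlin--Wentzell concatenation argument. Any admissible trajectory joining $K_3^\delta$ to $K_1^\delta$ must cross the $y$-axis and hence enter $K_2^\delta$; concatenating with the zero-cost flow from $K_2^\delta$ to $K_1^\delta$ gives the upper bound $V_{31}^{\sigma_0}\le V_{32}^{\sigma_0}$, while the reverse inequality follows from restricting any $(K_3^\delta \to K_1^\delta)$-trajectory to its first entry time into $K_2^\delta$. Formally this is the identity $V_{31}^{\sigma_0}=\min(V_{32}^{\sigma_0}+V_{21}^{\sigma_0},\widetilde V_{31}^{\sigma_0})$ from \cite{freidlin1998random} p.~152, where $\widetilde V_{31}^{\sigma_0}=+\infty$ since no extremal avoiding $K_2^\delta$ can go from $K_3^\delta$ to $K_1^\delta$. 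The symmetric argument gives $V_{13}^{\sigma_0}=V_{12}^{\sigma_0}$. The main delicate point is this last concatenation step: one must argue rigorously that a minimizing sequence enters $K_2^\delta$, and that the costs decompose additively across the entry time — this is exactly the content of the Freidlin--Wentzell formula for crossing an intermediate compact set, which applies here because the one-dimensional $x$-projection governs entries and exits of the bands $K_i^\delta$.
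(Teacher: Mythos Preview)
Your argument is essentially the paper's own: zero entries from the deterministic flow, positivity of $V_{32}^{\sigma_0}$ via the explicit value \eqref{exactvalue} on the time-reversed extremal, $V_{12}^{\sigma_0}=V_{32}^{\sigma_0}$ by the $x\mapsto -x$ symmetry, and $V_{31}^{\sigma_0}=V_{32}^{\sigma_0}$ (resp.\ $V_{13}^{\sigma_0}=V_{12}^{\sigma_0}$) from $\widetilde V_{31}^{\sigma_0}=+\infty$ together with the Freidlin--Wentzell reduction formula on p.~152.

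One small slip worth fixing: your explicit lower bound $(1-\delta^2)^2-\delta^2(2+\delta)^2$ for the bracket in \eqref{exactvalue} is positive only for $\delta<(\sqrt 3-1)/2\approx 0.366$, not on the whole interval $(0,1/2)$ (at $\delta=0.4$ it already gives $0.7056-0.9216<0$). The cure is to bound the action not between the endpoints $w_0\in[1-\delta,1+\delta]$ and $w_T\in[-\delta,\delta]$, but between the last exit point $w_{s_1}=1-\delta$ from $K_3^\delta$ and the first subsequent entry $w_{s_2}=\delta$ into $K_2^\delta$; the same identity then yields the lower bound $(1-\delta^2)^2-\delta^2(2-\delta)^2$, which is strictly positive for every $\delta\in(0,1/2)$. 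Apart from this numerical detail, the proof is correct and matches the paper's approach.
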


Let us now compute the global passage cost to the  sets $K^\delta_{i}$. 

\begin{Pro}
\label{prop-W}  Let us assume that $\sigma_{1}=0$. 
The values of the global passage cost $W^{\sigma_{0}}(K^\delta_{i})$ are given for $i\in \{1,2,3\}$ by
$$W^{\sigma_{0}}(K^\delta_{1}) = W^{\sigma_{0}}(K^\delta_{3}) = V^{\sigma_{0}}_{12}\ ;\ W^{\sigma_{0}}(K^\delta_{2}) = 2 V^{\sigma_{0}}_{12}.$$
\end{Pro}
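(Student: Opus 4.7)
The plan is a direct combinatorial verification based on the cost matrix established in Lemma \ref{lem_mat_cost}. By the definition above, for each $i \in \{1,2,3\}$ we must minimise $\sum_{(m\to n)\in g} V_{mn}^{\sigma_0}$ over the set $G(i)$ of acyclic oriented graphs on $\{1,2,3\}$ in which every vertex $m\neq i$ is the starting point of exactly one arrow (and the arrows starting from $\{1,2,3\}\setminus\{i\}$ therefore all eventually lead to $i$). Since we are working with only three vertices, $G(i)$ consists of at most three graphs, which I will enumerate explicitly. The cost of each graph is then read off from the matrix $P^\sigma$ in \eqref{pro_mat_cout}.

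For $i=1$, the vertex set outside the sink is $\{2,3\}$, and the admissible graphs are the three choices
\[
\{2\to 1,\,3\to 1\},\qquad \{2\to 1,\,3\to 2\},\qquad \{3\to 1,\,2\to 3\},
\]
(the configurations $\{2\to 3,\,3\to 2\}$ being excluded by the no-cycle condition). Using $P^\sigma$ we have $V^{\sigma_0}_{21}=V^{\sigma_0}_{23}=0$ and $V^{\sigma_0}_{31}=V^{\sigma_0}_{32}=V^{\sigma_0}_{12}$, so the three sums are respectively $0+V^{\sigma_0}_{12}$, $0+V^{\sigma_0}_{12}$ and $V^{\sigma_0}_{12}+0$. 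All equal $V^{\sigma_0}_{12}$, hence $W^{\sigma_0}(K_1^\delta)=V^{\sigma_0}_{12}$. The case $i=3$ is identical by the symmetry $x\mapsto -x$ of the dynamics \eqref{sys_main} when $\sigma_1=0$, which also leaves $P^\sigma$ invariant; alternatively one repeats the enumeration verbatim with $1$ and $3$ interchanged, giving $W^{\sigma_0}(K_3^\delta)=V^{\sigma_0}_{12}$.

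For $i=2$, the vertex set outside the sink is $\{1,3\}$ and the admissible graphs are
\[
\{1\to 2,\,3\to 2\},\qquad \{1\to 2,\,3\to 1\},\qquad \{3\to 2,\,1\to 3\}.
\]
From $P^\sigma$, each arrow starting from either stable equilibrium has cost $V^{\sigma_0}_{12}$ (namely $V^{\sigma_0}_{12}=V^{\sigma_0}_{13}=V^{\sigma_0}_{32}=V^{\sigma_0}_{31}$), so every graph contributes $2V^{\sigma_0}_{12}$. Therefore $W^{\sigma_0}(K_2^\delta)=2V^{\sigma_0}_{12}$.

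The proof contains no real obstacle: the only point that has to be checked carefully is the enumeration of $G(i)$, in particular the no-cycle and outdegree-one conditions, but with only three vertices this is straightforward. Everything else is bookkeeping using Lemma \ref{lem_mat_cost}.
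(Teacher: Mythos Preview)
Your argument is correct and follows exactly the same approach as the paper: a direct enumeration of the graphs in $G(i)$ and evaluation of their costs via the matrix $P^\sigma$ of Lemma~\ref{lem_mat_cost}. In fact you are more explicit than the paper, which only writes out the case $i=1$ and leaves $i=2,3$ implicit.
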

 
 \begin{proof}
 Let us compute $W^{\sigma_{0}}(K^\delta_{1})$. Consider the different graphs included in $G(1)$: $g_{1}= \{2\to 1, 3\to 1\}$, $g_{2}= \{2\to 1, 3\to 2\}$, $g_{3}= \{2\to 3, 3\to 1\}$. We have
 $$\sum_{(m\to n)\in g_{1}} V^{\sigma_{0}}_{mn} =0+V^{\sigma_{0}}_{31} \ ;\ \sum_{(m\to n)\in g_{2}} V^{\sigma_{0}}_{mn} =V^{\sigma_{0}}_{32}\ ;\ \sum_{(m\to n)\in g_{3}} V^{\sigma_{0}}_{mn} =V^{\sigma_{0}}_{12}.$$
 Then, it is immediate that $\ W^{\sigma_{0}}(K^\delta_{1}) =V^{\sigma_{0}}_{12}$. 
 \end{proof}
 
 \begin{Pro}  Let us assume that $\sigma_{1}=0$. 
 The sequence $(\pi^\varepsilon)_{\varepsilon\le \varepsilon_{0}}$ converges when $\varepsilon$ tends to $0$ to the probability measure
 $$\frac{1}{2}\big(\delta_{(-1,0)}+\delta_{(1,0)}\big).$$
 \end{Pro}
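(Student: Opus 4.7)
By Lemma \ref{lem_cara_pi}, any accumulation point $\pi$ of $(\pi^\varepsilon)_{\varepsilon\le\varepsilon_0}$ is of the form $a_1\delta_{z_1}+a_2\delta_{z_2}+a_3\delta_{z_3}$ with nonnegative weights summing to one, so the task reduces to identifying $a_1,a_2,a_3$. The plan is to use the Freidlin--Wentzell characterization of the support of the invariant measure via the global passage costs $W^{\sigma_0}(K_i^\delta)$, together with the $x\mapsto -x$ symmetry that is specific to the $\sigma_1=0$ case, to pin down $a_1=a_3=1/2$ and $a_2=0$.

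First, I would invoke the Freidlin--Wentzell result (Theorem 4.1 of Chapter~6 of \cite{freidlin1998random}, adapted as in the preceding discussion to our degenerate setting where the action functional reduces to the one-dimensional functional $S^{\sigma_0}_{0T}$ on the $x$-coordinate) stating that the limiting invariant measure concentrates on the equilibria that minimize $W^{\sigma_0}(K_i^\delta)$. By Proposition \ref{prop-W}, we have $W^{\sigma_0}(K_1^\delta)=W^{\sigma_0}(K_3^\delta)=V^{\sigma_0}_{12}$ while $W^{\sigma_0}(K_2^\delta)=2V^{\sigma_0}_{12}$, and $V^{\sigma_0}_{12}>0$ by Lemma \ref{lem_mat_cost}. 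Hence only $z_1$ and $z_3$ are minimizers, forcing $a_2=0$.

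To obtain equal weights on $z_1$ and $z_3$, the key observation is that when $\sigma_1=0$, the system \eqref{sys_intro} is invariant under the involution $(x,y)\mapsto(-x,-y)$: the drifts $\lambda_1 x(1-x^2)$ and $-\lambda_2 y+\lambda_3 x(1-x^2)$ are odd in $(x,y)$, while the diffusion coefficient $\sigma_0$ is constant. Therefore the pushforward of $\pi^\varepsilon$ under this involution is again an invariant probability measure for the generator $\mathcal{L}^\varepsilon$; by the uniqueness statement of Theorem \ref{thm_mes_inv_unique} it coincides with $\pi^\varepsilon$. Passing to the limit along a convergent subsequence, $\pi$ is invariant under the same symmetry, which exchanges $z_1$ and $z_3$. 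Combined with $a_2=0$ and $a_1+a_3=1$ this gives $a_1=a_3=1/2$, and since every accumulation point equals $\tfrac12(\delta_{(-1,0)}+\delta_{(1,0)})$, the tight sequence $(\pi^\varepsilon)_\varepsilon$ converges to it.

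The main obstacle is justifying the Freidlin--Wentzell concentration result in our degenerate two-dimensional setting, since the classical statement requires a non-degenerate diffusion matrix. This is exactly the reduction discussed just before Lemma \ref{lem_mat_cost}: because the drift of $Y^\varepsilon$ pushes it towards $0$ and the equilibria lie on $\{y=0\}$, the transition costs between the bands $K_i^\delta$ are determined by the one-dimensional action functional $S^{\sigma_0}_{0T}$ acting on the $x$-coordinate only. Once this reduction is made precise (upper/lower bound estimates on exit probabilities from each $K_i^\delta$ via the one-dimensional action), the classical graph-sum argument of \cite[Ch.~6]{freidlin1998random} applies verbatim, and the symmetry argument closes the proof.
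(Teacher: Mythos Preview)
The proposal is correct and follows essentially the same route as the paper: invoke the Freidlin--Wentzell concentration estimate (via Proposition \ref{prop-W}) to rule out $z_2$, then use a symmetry of the $\sigma_1=0$ system to force $a_1=a_3$, and conclude by tightness and Lemma \ref{lem_cara_pi}. The only cosmetic difference is that the paper uses the reflection $x\mapsto -x$ on the $X$-marginal to obtain $\pi^\varepsilon(x<0)=\pi^\varepsilon(x>0)$, whereas you use the full involution $(x,y)\mapsto(-x,-y)$ together with uniqueness of $\pi^\varepsilon$; both symmetries are valid here and yield the same conclusion.
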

\begin{proof} Using Theorem 4.1 in \cite{freidlin1998random} for the action functional defined in \eqref{fct_action}, we know that there exists $\gamma>0$ such that $\pi^\varepsilon(K^\delta_{i})$ belongs to the interval $$\bigg[\exp\bigg(-\varepsilon^2\Big(W^{\sigma_{0}}(K^\delta_{i}) -\min_{j\in\{1,2,3\}}W^{\sigma_{0}}(K^\delta_{j}) +\gamma\Big)\bigg), \exp\bigg(-\varepsilon^{-2}\Big(W^{\sigma_{0}}(K^\delta_{i}) -\min_{j\in\{1,2,3\}}W^{\sigma_{0}}(K^\delta_{j}) -\gamma\Big)\bigg)\bigg].$$
When $\varepsilon$ tends to $0$, the accumulation points of $(\pi^\varepsilon)$ will concentrate on the  sets $K^\delta_{i}$ attaining the minimum of $W^{\sigma_{0}}$, i.e. $K^\delta_{1}$ and $ K^\delta_{3}$. Symmetry arguments on the stochastic system \eqref{sys_intro} (with $\sigma_{1}=0$) with respect to the axis $x=0$, yield $\pi^\varepsilon(x<0)=\pi^\varepsilon(x>0)$, and then, $\pi^\varepsilon(K^\delta_{1})= \pi^\varepsilon(K^\delta_{3})$. Lemmas \ref{tight} and \ref{lem_cara_pi}  allow then to conclude that there is a unique limiting value of $(\pi^\varepsilon)$ given by $\frac{1}{2}\big(\delta_{(-1,0)}+\delta_{(1,0)}\big)$.
\end{proof}

We can now consider the general case where $\sigma_{1}\neq 0$ and satisfies Hypothesis \ref{hypo_bruit}.$1$ and prove Theorem \ref{thm_mes_lim_exp}.

\begin{proof}[Proof of Theorem \ref{thm_mes_lim_exp}.]
We will bounded the passage costs from below and above. 
Let $\delta\in ]0,\frac{\sigma_{0}-\sigma_{1}}{\sigma_{1}}[$ and to
fix ideas, assume that $\sigma_{1}>0$ (the other case is similar). We first consider the passage from $K^\delta_{1}$ to  $K^\delta_{2}$. Let $x\in ]-(1+\delta),0[$, then
$$\sigma_{0}-(1+\delta)\sigma_{1}<\sigma_{0}+ \sigma_{1}x<\sigma_{0}.$$
In particular, if $w=(w_{1},w_{2})$ is a trajectory such that for any $t\in[0,T]$, $w_{1}(t)\in ]-(1+\delta),0[$, then
$$
S^{\sigma}_{0T}((w_t)_{t\geq 0})> S^{\sigma_{0}}_{0T}((w_t)_{t\geq 0}),$$ 
and
$$V^{\sigma}_{12}>V^{\sigma_{0}}_{12}.$$
For the cost passage from $K^\delta_{3}$ to  $K^\delta_{2}$, we assume that $w_{1}(t)\in ]0, (1+\delta)[$, leading to 
$$V^{\sigma}_{32}<V^{\sigma_{0}}_{32}= V^{\sigma_{0}}_{12}.$$ Then we deduce that $\, V^{\sigma}_{12}>V^{\sigma}_{32}>0$. By similar arguments, $\sigma_{1}<0$ yields $\, V^{\sigma}_{32}>V^{\sigma}_{12}>0$.
That allows to compute $W^{\sigma}(K^\delta_{i})$ as in the proof of Proposition \ref{prop-W}. That gives
$$W^{\sigma}(K^\delta_{1}) = V^{\sigma}_{32}\, ;\, W^{\sigma}(K^\delta_{2}) = V^{\sigma}_{32}+ V^{\sigma}_{12} \,; \,W^{\sigma}(K^\delta_{3}) = V^{\sigma}_{12}.$$
The quantity $\min_{j\in\{1,2,3\}}W^{\sigma_{0}}(K^\delta_{j})$ depends on the sign of $\sigma_{1}$. \\

Using again \cite[Theorem 4.2]{freidlin1998random} for the action functional defined by \eqref{fct_action}, we obtain that when $\varepsilon$ tends to 0,  any converging subsequence of $(\pi^\varepsilon)$ converges  to a measure $\pi$ which concentrates on $K^\delta_1$ if $\sigma_1<0$ and on $K^\delta_3$ if $\sigma_1>0$ (and on both sets if $\sigma_1=0$). Using Lemma \ref{lem_cara_pi},  we deduce that there is a unique limiting measure, either $\delta_{(-1,0)}$ if $\sigma_1>0$ or $\delta_{(1,0)}$ if $\sigma_1<0$.
\end{proof}

\bigskip {\bf Acknowledgments}: This work has been supported by the Chair Mod\'elisation Math\'ematique et Biodiversit\'e of Veolia - Ecole polytechnique - Museum national d'Histoire naturelle - Fondation X. It is also funded by the European Union (ERC AdG SINGER, 101054787). Views and opinions expressed are however those of the author(s) only and do not necessarily reflect those of the European Union or the European Research Council. Neither the European Union nor the granting authority can be held responsible for them.

\end{document}